\pdfoutput=1 
\documentclass[12pt,twoside]{article}

\usepackage{geometry}
\geometry{
  left=3cm,
  right=3cm,
  top=2cm,
  bottom=4cm,
  bindingoffset=5mm
}

\usepackage[T1]{fontenc}        % Worttrennung mit Umlauten

\usepackage[utf8x]{inputenc}    % Unicode
\usepackage{ucs}                % Unicode
\usepackage[comma,authoryear]{natbib}
\usepackage{amsmath}            % Mathematik Erweiterungen
\usepackage{amsfonts}
\usepackage{amssymb}
\usepackage{amsthm} 
\usepackage{thmtools}
\usepackage{ifthen}
\usepackage{color}
\usepackage{graphicx} 
\usepackage{textcomp}            %Zusätzliche Symbole
\usepackage{palatino}            %Schrift
\usepackage{eulervm}            %Mathe Zeichen
\linespread{1.05} 
\usepackage{varioref}            %Verweis von Seiten
\usepackage{fancyhdr}
\usepackage{float}
\usepackage{multirow}
\usepackage{rotating}
\usepackage{subfig}
\usepackage{enumerate}

%DATEINAMEN ZUM SCHLUSS ANPASSEN!!!!!

\vfuzz=10pt
\pagestyle{plain}
%\KOMAoptions{DIV=14}

% Sonderzeichen f�r Mengen von Zahlen
         % \C f�r C (komplexe Zahlen\right)
         % \Q f�r Q (rationale Zahlen\right)
\newcommand{\R}{{\mathbb{R}}}         % \R f�r R (reelle Zahlen\right)
        % \Z f�r Z (ganze Zahlen\right)
\newcommand{\E}{{\mathbb{E}}}
 \newcommand{\N}{{\mathbb{N}}}         % \N f�r N (nat�rliche Zahlen\right)
         % \K f�r K (K�rper\right)

%\renewcommand{\theequation}{\roman{equation}}
%\newcommand{\pp}[1]{\phantom{#1}}

\newcommand{\bay}{\begin{array}}
\newcommand{\eay}{\end{array}}

\newcommand{\bqa}{\begin{eqnarray*}}
\newcommand{\eqa}{\end{eqnarray*}}

\newcommand{\bee}{\begin{eqnarray*}}
\newcommand{\eee}{\end{eqnarray*}}

\newcommand{\bea}{\begin{eqnarray*}}
\newcommand{\eea}{\end{eqnarray*}}

\newcommand{\bqan}{\begin{eqnarray}}
\newcommand{\eqan}{\end{eqnarray}}

\newcommand{\be}{\begin{eqnarray}}
\newcommand{\ee}{\end{eqnarray}}

\newcommand{\bit}{\begin{itemize}}
\newcommand{\eit}{\end{itemize}}

\newcommand{\ben}{\begin{enumerate}}
\newcommand{\een}{\end{enumerate}}

\newcommand{\beq}{\begin{equation}}
\newcommand{\eeq}{\end{equation}}

\newcommand{\bdes}{\begin{description}}
\newcommand{\edes}{\end{description}}

\newcommand{\btb}{\begin{tabular}}
\newcommand{\etb}{\end{tabular}}

\newcommand{\bcen}{\begin{center}}
\newcommand{\ecen}{\end{center}}

\newcommand{\bmp}{\begin{minipage}}
\newcommand{\emp}{\end{minipage}}

\newcommand{\Var}{\operatorname{{\it Var}}}

\newcommand{\tr}{\operatorname{tr}}

\newcommand{\diag}{\operatorname{\it diag}}

%---------------------------------------------------------------

%---------------------------------------------------------------

\newcommand{\ve}{\boldsymbol{e}}

\newcommand{\vn}{\boldsymbol{n}}

\newcommand{\vH}{\boldsymbol{H}}
\newcommand{\vI}{\boldsymbol{I}}
\newcommand{\vJ}{\boldsymbol{J}}

\newcommand{\vP}{\boldsymbol{P}}

\newcommand{\vT}{\boldsymbol{T}}

\newcommand{\vV}{\boldsymbol{V}}

\newcommand{\vX}{\boldsymbol{X}}
\newcommand{\vY}{\boldsymbol{Y}}

\newcommand{\vmu}{\boldsymbol{\mu}}

\newcommand{\vsigma}{\boldsymbol{\sigma}}
\newcommand{\vSigma}{\boldsymbol{\Sigma}}

\newcommand{\veins}{{\bf 1}}
\newcommand{\vnull}{{\bf 0}}

%---------------------------------------------------------------

%---------------------------------------------------------------

%---------------------------------------------------------------

%---------------------------------------------------------------

%---------------------------------------------------------------

%---------------------------------------------------------------

%---------------------------------------------------------------

%---------------------------------------------------------------

%---------------------------------------------------------------

% Weitere Sonderzeichen
                % :=
      % Mengenklammern
               % anderes Epsilon
            % Abbildungspfeil
           % Ueberstreichen
     % Skalarprodukt (eckige Klammern\right)
\newcommand{\ind}{1\hspace{-0.7ex}1}

\newcommand{\Cdot}{\cdot}

\newcommand{\0}{\mathcal{O}}

\newtheoremstyle{Test1}% name of the style to be used
  {2 \baselineskip}% measure of space to leave above the theorem. E.g.: 3pt
  {1.5 \baselineskip}% measure of space to leave below the theorem. E.g.: 3pt
  {\itshape}% name of font to use in the body of the theorem
  {-0.0ex}% measure of space to indent
  {\fontfamily{ppl}\fontseries{l}\fontshape{n}}% name of head font
  {:}% punctuation between head and body
  {\newline}% space after theorem head; " " = normal interword space
   {}% Manually specify head

\theoremstyle{Test1}
\newtheorem{theorem}{Theorem}[section]

\newtheorem{Le}[theorem]{Lemma}

%\crefdefaultlabelformat{\hspace{-0.6cm}#2#1#3\hspace{1.3cm}}

\newcommand{\Lan}{\mathcal{O}}
\newcommand{\lan}{ \scriptstyle \mathcal{O}\textstyle}

\makeatletter
\renewenvironment{proof}[1][\proofname]{\par
  \pushQED{\qed}%
  \fontfamily{ppl}\fontseries{m}\fontshape{it} \topsep6\p@\@plus6\p@\relax
  \trivlist
  \item[\hskip\labelsep
        \bfseries
    #1\@addpunct{:}]\ignorespaces
}{%
  \popQED\endtrivlist\@endpefalse
}
\makeatother

\setlength{\parindent}{0pt}

%\addtokomafont{section}{\changefont{pbk}{m}{it}} 
%\addtokomafont{section}{\changefont{pbk}{m}{it}} 
%\addtokomafont{subsection}{\changefont{pbk}{m}{it}} 
%\addtokomafont{subsubsection}{\changefont{pbk}{m}{it}} 

\begin{document}
%\begin{titlepage}
%    \begin{center}
%    \changefont{pcr}{m}{n}
%    \vspace*{5\baselineskip}
%    \Huge\texttt{High-Dimensional Repeated Measures Analysis: Asymptotics and Multiple Samples} \\
%\end{center}
%   
%\end{titlepage}
%\normalfont
%
%\thispagestyle{empty}
%
%
%

\title{\Large \bf Manifold Asymptotics of Quadratic-Form-Based Inference in 
Repeated Measures Designs}

\author{Paavo Sattler$^{*}$ \\[1ex] 
%{\small University of G�ttingen, Germany}
}
\maketitle

\begin{abstract}
\noindent
Split-Plot or Repeated Measures Designs with multiple groups occur naturally in sciences. Their analysis is usually based on the classical Repeated Measures ANOVA. Roughly speaking, the latter can 
be shown to be asymptotically valid for large sample sizes $n_i$ 
assuming a fixed number of groups $a$ and time points $d$. However, for high-dimensional settings with $d>n_i$  this argument breaks down and statistical tests are often based on (standardized) quadratic forms. Furthermore analysis of their limit behaviour is usually based on certain assumptions on how $d$ converges to $\infty$ with respect to $n_i$. As this may be hard to argue in practice, we do not want to make such restrictions. Moreover, sometimes also the number of groups $a$ may be 
large compared to $d$ or $n_i$. To also have an impression about the behaviour of (standardized) quadratic forms as test statistic, we analyze their asymptotics under diverse settings on $a$, $d$ and $n_i$. 
In fact, we combine all kinds of combinations, where they diverge or are bounded in a unified framework. Studying the limit distributions in detail, we follow Sattler and Pauly (2018) and propose an approximation to obtain critical values. The resulting test together with their approximation approach is investigated in an extensive simulation study with a focus on the exceptional asymptotic frameworks which are the main focus of this work.
%	The results of \cite{sattler2018} are expanded to these settings and generalized to construct asymptotic correct tests. Also, their small sample approximation approach is adapted by using the equality of covariance matrices. After all, the performance under these unified asymptotics frameworks is investigated in simulations.

\end{abstract}

\noindent{\bf Keywords:} Multivariate Data, equal covariance matrices, high-dimensions, repeated measures.

\vfill
\vfill

\noindent${}^{*}$ {Dortmund University, Faculty of Statistics, Germany}\\
 %\noindent${}^{**}$ {Department for Mathematics, University of Salzburg, Austria}\\

\newpage
%\tableofcontents

\section{Motivation and Introduction}\label{int}
In many studies, it is possible to conduct and handle a large number of measurements, which makes high-dimensionality an increasingly important topic. In fact, high dimensional repeated measure designs or split-plot designs for multiple groups are the objectives of many analyses in science. This is the case in life science, where test persons were examined multiple times during a study, or in the industry where some parameters are measured on a nearly continuous basis.
Therein we consider $d$ measurements from $N$ subjects which are divided into $a$ independent and generally unbalanced groups where the i-th group contains $n_i$ observations.
Moreover, factor levels on the groups or repeated measures are possible. For independent $d$-dimensional observation vectors $\vX_{ik}\sim\mathcal{N}_d(\mu_i,\Sigma_i)$ null hypotheses  regarding $\vmu=(\vmu_1,...,\vmu_a)^\top$ are investigated, where popular hypotheses are the existence of a group effect, a time effect as well as a interaction effect between time and group. For a classical repeated measure ANOVA design with $d\leq n_i$, this was treated for example in \cite{brunner2012}. But in many cases, it is easier, cheaper or ethically more justifiable to increase the number of repetitions rather than increasing the sample size. 
Therefore techniques are needed, which can handle the case of $d>n_i$.

In the special case with just two groups but with a general distributional setting and without restriction on the dimension $d$ this was treated in \cite{chen2010}.
For more groups and a more general setting regarding hypotheses, \cite{happ2016} uses a 
classical ANOVA F test statistic, which has just an exact F-distribution for very special covariances matrices. So under some conditions on $n_i/d$ or the relation between the dimension and some power of traces containing the covariance matrix, they developed a decent approximation for the test statistic.

In \cite{harrar2016} they handle several cases with an increasing number of groups under some requirements on the covariance matrices and the relation between sample sizes and the number of factor levels.
In contrast, \cite{pEB} investigated the case with just one normal distributed group, but fewer assumptions on the covariance matric and no necessary relation between sample size and dimension.\\

\cite{sattler2018} expand these results especially for a larger number of groups, which is also allowed to approach infinity, together with the sample sizes and the dimension. Hereby, no restrictions on their respective convergence rate were made. 
However, this does not treat the small $n$ large $a$ case which was, e.g., treated by  \cite{bathke2002} or \cite{bathke2005} for fixed dimensions $d$ and balanced designs $n_i\equiv n$.

%This treats in a sense the large $a$, small $n_i$ case which was, for example, worked on \cite{bathke2002} or \cite{bathke2005} but there with fixed dimension and balanced setting with fixed sample size. The presented paper aims to combine both asymptotic frameworks the approach of \cite{sattler2018} which considered unequal covariance matrices and looked at $n_i$, $\max(a,d)\to \infty$. \\
Therefore in addition to the large $a$ small $n$ case, we include the large $d$ small $n$ case, and further the combination of both, and develop a technique that can be used in each of these settings. To this end, we follow the same approach as \cite{kong2019} and assume homogenous covariance matrices with $\vSigma_i=\vSigma>0$, again with no further assumptions on the structure of the covariance matrix $\vSigma$. The homoscedastic setting allows some generalizations as well as a smaller number of other requirements on the underlying statistical model.\\\\

This paper is organized as follows. Section 2 introduces the statistical model, the investigated hypotheses and the notations used in the remaining paper. In Section 3 the test statistic is presented, as well as their asymptotic behavior and an alternative small sample approximation.
Section 4 contains simulations regarding the type-I-error rate and the power of the tests, introduced in the previous chapters.
The paper closes with a short conclusion. For brevity and readability, all proofs are shifted to the appendix.
\section{Statistical Model and  Hypotheses} \label{mod}

%We investigate a homogenous split-plot design with $d$-dimensional observations and $a$ unbalanced groups.  
We consider a  homogenous split-plot design given by $a$ independent and unbalanced groups of $d$-dimensional random vectors
\bqan{\vX}_{i,j}= ({X}_{i,j,1},\dots,{X}_{i,j,d})^\top \stackrel{ind}{\sim}\mathcal{N}_d\left(\vmu_i,\vSigma\right)\hspace{0,2cm}j=1,\dots,n_i,\hspace{0,2cm} i=1,\dots,a ,
\eqan
whereby each vector represents the measurement of one independent subject.  It is assumed  that mean vectors $E(\vX_{i,1})=\vmu_i = (\mu_{i,t})_{t=1}^d \in \R^d$ and one positive definite covariance 
matrix $Cov (\vX_{i,1})=\vSigma>0$ exist.
As usual $j=1,\dots,n_i$ denotes the individual subjects or units in group $i=1,\dots, a$, $a,n_i\in \N$, so we have a total number of $N=\sum_{i=1}^a n_i$ random vectors.
This framework allows a factorial structure regarding time, group or both, by splitting up the indices, accordingly, see \cite{kon:2015} for example.\\\\

Within this model  linear hypotheses of repeated measures ANOVA, 
formulated as
\bqan\label{eq:null hypo}
\mathcal H_0(\vH) :\vH\vmu=\vnull \quad   \vmu = (\vmu_1^\top,\ldots,\vmu_a^\top)^\top,
\eqan 
are investigated.
Here, $\vH=\vH_W\otimes \vH_S$ denots a proper hypothesis matrix , where $\vH_W$ and $\vH_S$ refer to  {\bf w}hole-plot (group) 
and/or {\bf s}ubplot (time)  effects, while $\otimes$ denotes the Kronecker product.

%From a theoretical view it is frequently  reasonable to formulate  $\mathcal H_0(\vH)$
 For theoretical considerations it is often more convenient to reformulate $\mathcal H_0(\vH)$ through a corresponding projection matrix
{$\vT =\vH^\top [\vH \vH^\top]^- \vH$}, see e.g. \cite{pEB}. 
Here $(\cdot)^-$ denotes some generalized inverse of the matrix and $\mathcal H_0(\vH)$ can equivalently be written as 
$\mathcal H_0(\vT):\vT\vmu=\vnull$. 
As discussed in \cite{sattler2018}, $\vT$ has the form $\vT=\vT_W\otimes\vT_S$ for projection matrices $\vT_W$ and $\vT_S$. Now hypotheses of interest are for example given by

\bit
\item[(a)] No group effect:\\ $\mathcal H_0^a: \left(\vP_a\otimes \frac{1}{d}\vJ_d\right)\vmu=\vnull$,
\item[(b)] No time effect:\\ $\mathcal H_0^b: \left(\frac 1 a\vJ_a\otimes \vP_d\right)\vmu=\vnull$,
\item[(c)] No interaction effect between time and group:\\
$\mathcal H_0^{ab}: \left(\vP_a\otimes \vP_d\right)\vmu=\vnull$.
\eit

Here, $\vJ_d$ is the d-dimensional matrix only containing 1s and $\vP_d:=\vI_d-1/d\cdot \vJ_d$ is the centring matrix. \\\\

It is often useful to split the expectation vector into its components to simplify the interpretation.
With the common conditions $\sum_i \alpha_i = \sum_t \beta_t = \sum_{i,t} (\alpha\beta)_{it} = 0$, this can be done by expanding $$
\mu_{i,t}= \mu + \alpha_i + \beta_t + (\alpha\beta)_{it},\quad i=1,\dots,a;\ t=1,\dots,d.
$$
Here, $\alpha_i\in\R$ describes the $i$-th group effect, $\beta_t\in \R$ the time effect at time point $t$ and 
$(\alpha\beta)_{it}\in \R$ the $(i,t)$-interaction effect between group and time. Thereby the above hypotheses can alternatively  be formulated through
\bit\item[(a)] $H_0^a: \alpha_i\equiv 0 \text{ for all } i$, 
\item[(b)] $H_0^b: \beta_t\equiv 0 \text{ for all } t,$ \item[(c)]  $H_0^{ab}: (\alpha\beta)_{it}\equiv 0 \text{ for all } i,t$.
\eit

\section{Test statistic and their asymptotic}

In this work we  consider the following 5 different asymptotic frameworks, which are:

\renewcommand{\theequation}{\Roman{equation}}

\setcounter{equation}{0}
\begin{eqnarray}
a\to \infty,\label{asframe1}\\
a,d\to \infty,\label{asframe2}\\
a,n_{\max}\to \infty,\label{asframe3}\\
d,n_{\max}\to \infty,\label{asframe4}\\
a,d,n_{\max} \to \infty\label{asframe5}.
\end{eqnarray}

\renewcommand{\theequation}{\arabic{equation}}
\setcounter{equation}{2}
This great diversity is exceptional and distinguishes from nearly all other approaches. Most of the existing procedures  just consider special cases of one of these cases (for example \cite{chen2010} (IV) with $a=1$ or  \cite{pEB} (IV) with $a=2$).  Other allow for only one as \cite{happ2016} for (IV) or  \cite{bathke2002} for (I).\\\\

In contrast, our framework allows the combination of any of these assumptions. However, $d\to\infty$ alone is not included as this would not allow the construction of consistent trace estimators of covariances which are later needed for inference. Moreover, the case $n_{\max}=\max(n_1,...,n_a)\to \infty$ with fixed $a$ and $d$ has already been studied in detail in the literature and is, thus excluded here, see, e.g., \cite{friedrich2017permuting} or \cite{bathke2018} and the references cited therein.\\\\

%Here nearly every combination of two of these parameters is allowed. While for only $d\to \infty$ it is impossible to estimate the needed traces without strong assumptions,  for only $n_{\max}:=\max(n_1,...,n_a)\to \infty$ there is no high-dimensionality and therefore other better ways to test the hypotheses.\\\\
\textbf{It is apparent that in contrast to \citet{sattler2018} and other papers, the common conditions as  $\frac{n_i}{N}\to \kappa_i \in (0,1)$ are missing}. This is significant, because it allows an appreciably larger amount of settings, especially for $a\to \infty$. But it also clearly generalizes the model for the case of fixed $a$, e.g. in unbalanced settings, where we only let some group sample sizes converge to $\infty$. \\\\

To examine the validity of the nullhypothesis $H_0(\vT):\vT\vmu=\vnull$ unattached from the asymptotic framework, we use
$Q_N=N\cdot\overline {\vX}^\top \vT\overline {\vX}.$
 Here ${\overline{\vX}=(\overline\vX_1^\top,\dots\overline\vX_a^\top)^\top}$ with $\overline {\vX}_{i} = n_i^{-1} \sum_{j=1}^{n_i} \vX_{i,j}, i=1,\dots,a,$ denotes the vector of pooled group means. 
Unfortunately for many covariance matrices $\vSigma$, this random variable tends to converge to infinity, for $d\to \infty$ or $a\to \infty$. To avoid this behaviour 
 the standardized quadratic form is given by

\[\widetilde W_N=\frac{Q_N-\E_{\mathcal H_0}(Q_N)}{\sqrt{\Var_{\mathcal H_0}(Q_N)}},\]
is used, which also enables us to evaluate all limit distributions in detail.\\\\
For normal distributed observations the expectation and variance of the quadratic form is known and it follows that\\

$\begin{array}{ll}\E(Q_N)&=\tr(\vT_S\vSigma)\cdot \sum\limits_{i=1}^a \frac{N}{n_i} (\vT_W)_{ii}\\[1.4ex]
\Var(Q_N)&=2\Cdot \tr((\vT_S\vSigma)^2)\cdot \sum\limits_{i=1}^a\sum\limits_{r=1}^a \frac{N^2}{n_i n_r} (\vT_W)_{ir}^2.\end{array}$\\\\
Observe, that for both values only the first factor $\tr(\vT_S\vSigma)$ resp.  $\tr((\vT_S\vSigma)^2)$ depends on  the unknown covariance matrix, while all other quantities are known from the test setting.

Applying the representation theorem for quadratic forms in normaly distributed random vectors from\cite{mathaiProvost} we can rewrite the standardized statistic $\widetilde W_N$ as
{\begin{equation}\label{eq: tileW}
\widetilde W_N  = \frac{Q_N-\E_{H_0}(Q_N)}{\Var_{H_0}\left(Q_N\right)^{1/2}} 
%\sum_{i=1}^a \frac{N}{n_i } \tr\left(\vT\vSigma_i\right)}{\sqrt{2\cdot\sum_{i=1}^a\sum_{j=1}^a \frac{N^2}{n_r n_i}\tr\left(\vT\vSigma_i \vT\vSigma_j\right)}}
\ \stackrel{\mathcal{D}}{=} \ \sum\limits_{s=1}^{ad} 
\frac{\lambda_s}{\sqrt{\sum_{\ell=1}^{ad} \lambda_\ell^2}}\left(\frac{C_s-1}{\sqrt{2}}\right).
\end{equation}
Here $\lambda_s$ are the eigenvalues of $\vT\vV_N\vT$ in decreasing order, $\vV_N=\bigoplus_{i=1}^a \frac{N}{n_i}\vSigma$ and $(C_s)_s$ is a sequence of independent 
$\chi_1^2$-distributed random variables. As a consequence, the asymptotic  behaviour of the eigenvalues, determine the asymptotic limit distribution of $\widetilde W_N$. In fact, we obtain in generalization of \cite{pEB} and \cite{sattler2018}:

\begin{theorem}\label{Asymptotik}
{Let $\beta_s={\lambda_s}\Big/{\sqrt{\sum_{\ell=1}^{ad}\lambda_\ell^2}}$ for $s=1,\dots,ad$. }
Then $\widetilde W_N$ has, under $H_0(\vT)$, and one of the 
frameworks  \eqref{asframe1}-\eqref{asframe5}  asymptotically
\begin{itemize} 
\item[a)]a distribution of the form $\sum_{s=1}^rb_s \left(C_s-1\right)/\sqrt 2+\sqrt{1-\sum_{s=1}^r b_s^2}\cdot Z$,  if and only if
\[\text{for all } s\in \N \hspace{0.5cm}\beta_s\to b_s \hspace{0.5cm} \text{as }\hspace{0.2cm} N \to \infty,\]
for a decreasing sequence $(b_s)_s$ in $[0,1]$ with  $r:=\#\{b_i\neq 0\}$, while $C_i\stackrel{i.i.d.}{\sim} \chi_1^2$, $Z\sim \mathcal{N}(0,1)$.% and $b_s>0$ for all s.
\item[b)]a distribution of the form $\sum_{s=1}^\infty b_s \left(C_s-1\right)/\sqrt 2$,  if 
\[\text{for all } s\in \N \hspace{0.5cm}\beta_s\to b_s \hspace{0.5cm} \text{as }\hspace{0.2cm} N \to \infty,\]
for a decreasing sequence $(b_s)_s$ in $(0,1)$ with $\sum_{s=1}^\infty b_s^2=1$ and $C_i\stackrel{i.i.d.}{\sim} \chi_1^2$.% and $b_s>0$ for all s.

\end{itemize}
\vspace{-.5cm}

\end{theorem}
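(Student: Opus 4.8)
The plan is to reduce everything to the stochastic representation \eqref{eq: tileW} and then argue purely by characteristic functions, so that the five frameworks \eqref{asframe1}--\eqref{asframe5} enter only through a handful of elementary facts. Write $\widetilde W_N \stackrel{\mathcal D}{=}\sum_{s=1}^{ad}\beta_s(C_s-1)/\sqrt2$ and extend the sequence by $\beta_s:=0$ for $s>ad$. First I would record that $(\beta_s)_{s\ge1}$ is non-increasing, lies in $[0,1]$, and satisfies $\sum_{s\ge1}\beta_s^2=\sum_{s=1}^{ad}\lambda_s^2/\sum_{\ell=1}^{ad}\lambda_\ell^2=1$; monotonicity then forces the a priori bound $\beta_s\le s^{-1/2}$, and $ad\to\infty$ under each of \eqref{asframe1}--\eqref{asframe5}. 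Let $\psi_\beta(t):=-it\beta/\sqrt2-\tfrac12\log(1-\sqrt2\,i\beta t)$ be the logarithm of the characteristic function of $\beta(C-1)/\sqrt2$ with $C\sim\chi_1^2$, and set $g(\beta,t):=\psi_\beta(t)+\tfrac12 t^2\beta^2$. A short Taylor computation gives $g(0,t)=\partial_\beta g(0,t)=\partial_\beta^2 g(0,t)=0$, hence $|g(\beta,t)|\le c(t)\,\beta^3$ uniformly for $\beta\in[0,1]$.

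For the ``if'' direction I would note that the logarithm of the characteristic function of $\widetilde W_N$ at $t$ equals $\sum_{s=1}^{ad}\psi_{\beta_s}(t)=-\tfrac{t^2}{2}+\sum_{s\ge1}g(\beta_s,t)$, using $\sum_s\beta_s^2=1$. If $\beta_s\to b_s$ for every $s$, the bound $|g(\beta_s,t)|\le c(t)\beta_s^3\le c(t)\,s^{-3/2}$ furnishes a summable majorant independent of $N$, so dominated convergence yields $\sum_{s\ge1}g(\beta_s,t)\to\sum_{s\ge1}g(b_s,t)$. Exponentiating, the characteristic function of $\widetilde W_N$ converges to $\prod_{s\ge1}e^{\psi_{b_s}(t)}\cdot\exp\!\big(-\tfrac{t^2}{2}(1-\sum_sb_s^2)\big)$ (the infinite product converging since $\sum_sb_s^2<\infty$), which is exactly the characteristic function of $\sum_{s\ge1}b_s(C_s-1)/\sqrt2+\sqrt{1-\sum_sb_s^2}\,Z$. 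By L\'evy's continuity theorem $\widetilde W_N$ converges in distribution to this variable; taking $r:=\#\{b_s\ne0\}<\infty$ recovers form a), and $\sum_sb_s^2=1$ recovers form b).

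For the ``only if'' in a), suppose $\widetilde W_N$ converges in distribution to the stated limit $L$ with weight sequence $(b_s)$. Since $[0,1]^{\mathbb{N}}$ is sequentially compact in the product topology, every subsequence of $(\beta_\cdot^{(N)})_N$ has a further subsequence along which $\beta_s^{(N)}\to\widetilde b_s$ for all $s$, with $(\widetilde b_s)$ non-increasing in $[0,1]$ and $\sum_s\widetilde b_s^2\le1$ by Fatou. Along that subsequence the ``if'' direction forces $\widetilde W_N\stackrel{\mathcal D}{\longrightarrow}\sum_s\widetilde b_s(C_s-1)/\sqrt2+\sqrt{1-\sum_s\widetilde b_s^2}\,Z=:L_{\widetilde b}$, so $L_{\widetilde b}=L$ in law. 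Both laws have a moment generating function near the origin (their squared weights being summable), so from $L_{\widetilde b}=L$ their cumulants coincide; for $n\ge3$ the $n$-th cumulant of the law with weights $(x_s)$ is $2^{n/2-1}(n-1)!\sum_s x_s^n$, the Gaussian part contributing nothing, whence $\sum_s\widetilde b_s^n=\sum_sb_s^n$ for all $n\ge3$. The generating function $\sum_{n\ge3}\big(\sum_s x_s^n\big)z^n=\sum_s x_s^3z^3/(1-x_sz)$ is meromorphic on $\{|z|<x_1^{-1}\}$ with poles precisely at the points $x_s^{-1}$ ($x_s>0$), the order at each pole being its multiplicity; reading off poles and orders shows the non-increasing sequences $(\widetilde b_s)$ and $(b_s)$ coincide. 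As every subsequence therefore has a sub-subsequence along which $\beta_s^{(N)}\to b_s$ for all $s$, the whole sequence converges: $\beta_s^{(N)}\to b_s$.

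The step I expect to be the crux is making the single dominated-convergence argument genuinely produce the Gaussian factor $\exp(-\tfrac{t^2}{2}(1-\sum_sb_s^2))$ out of the residual mass $1-\sum_s\beta_s^2$: this requires the uniform cubic control $|g(\beta,t)|\le c(t)\beta^3$ together with the a priori decay $\beta_s\le s^{-1/2}$, and it is this pair of estimates that replaces, in a self-contained way, a Lindeberg/Lyapunov condition and lets the proof dispense with any rate linking $a$, $d$ and the $n_i$. The second delicate point is the injectivity of the weight-sequence-to-law map in the converse; since we do not control moments of $\widetilde W_N$ itself, the cumulant comparison must be run between the two \emph{fixed} limit laws $L_{\widetilde b}$ and $L_b$. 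Everything else is algebraic and, crucially, identical across \eqref{asframe1}--\eqref{asframe5}, which is exactly why the distinctions among the frameworks never surface in the argument.
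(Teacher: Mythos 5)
Your proof is correct, and it takes a genuinely different and more self-contained route than the paper. The paper imports the two extreme cases ($\beta_1\to 0$, $\beta_1\to 1$) and the infinite-sum case b) wholesale from Sattler and Pauly (2018), and for the new mixed case argues via a loose appeal to Cram\'er's decomposition theorem to split off the Gaussian part, a subsequence extraction, and then identification of the (finitely many) nonzero weights by matching the zeros of the polynomial factors of the two moment generating functions. You instead prove the ``if'' direction for all cases at once by writing the log-characteristic function as $-t^2/2+\sum_s g(\beta_s,t)$ with the uniform cubic bound $|g(\beta,t)|\le c(t)\beta^3$ and the a priori decay $\beta_s\le s^{-1/2}$, so that a single dominated-convergence step produces both the chi-square factors and the Gaussian factor $\exp\bigl(-\tfrac{t^2}{2}(1-\sum_s b_s^2)\bigr)$ out of the residual mass; this is exactly the mechanism the paper leaves implicit by citation, and it makes transparent why no rate linking $a$, $d$ and the $n_i$ is needed. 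For the converse you replace the paper's MGF zero-counting (which only applies to finite sums) by sequential compactness of $[0,1]^{\N}$ plus identification of the limit law through its cumulants of order at least $3$ and the meromorphic function $\sum_s x_s^3z^3/(1-x_sz)$; this covers infinitely many nonzero limits $b_s$ and makes the separation of the Gaussian component automatic rather than an appeal to Cram\'er's theorem. One small correction: every pole of that generating function is simple, and the multiplicity $m_c$ of a value $c>0$ in the weight sequence is read off from the residue at $z=1/c$ (equal to $-m_c/c$), not from the pole order; equivalently, $(\sum_s x_s^n)^{1/n}\to x_1$ lets you peel off the weights inductively. This slip does not affect the validity of the argument.
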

\textbf{Putting the results into context.} \cite{chen2010} only considered case a) with $r=0$. \cite{sattler2018} at least found asymptotic results in case b) but for case a) they need $\boldsymbol{r\in\{0,1\}}$.
So this theorem is not only distinct from other results through the variety of asymptotic settings. It also enhances the continuum of limit distributions considerably through a mixture of normal distribution and finite sums of weighted standardized $\chi_1^2$-distributed random variables. Furthermore, the if and only if relation shows the importance of the demands for the standardized eigenvalues and that it isn't possible to relax them.\\\\

%Here the results from \cite{sattler2018} were expanded by a generalization, which enables the application of this theorem to further situations, without the need for homogeneity.

To use this test statistic it is necessary to construct proper estimators which are ratio consistent in all our settings. To this end,  define\\

$A_1=\frac 1 { \sum_{i=1}^a (n_i-1)n_i}\sum\limits_{i=1}^a  \sum\limits_{\ell_1< \ell_2=1}^{n_i} (\vX_{i,\ell_1}-\vX_{i,\ell_2})^\top\vT_S (\vX_{i,\ell_1}-\vX_{i,\ell_2})$\\\\ and
\\\\$A_2=\sum\limits_{i=1}^a \sum\limits_{\begin{footnotesize}\substack{\ell_1,\ell_2=1\\ \ell_1>\ell_2}\end{footnotesize}}^{n_i}
\sum\limits_{\begin{footnotesize}\substack{k_2=1\\k_2\neq \ell_1\neq \ell_2 }\end{footnotesize}}^{n_i}\sum\limits_{\begin{footnotesize}\substack{k_1=1\\ \ell_2\neq \ell_1\neq k_1>k_2}\end{footnotesize}}^{n_i}
\frac{\left[\left({\vX}_{i,\ell_1}-{\vX}_{i,\ell_2}\right)^\top \vT_S\left({\vX}_{i,k_1}-{\vX}_{i,k_2}\right)\right]^2} {4\cdot 6 \sum_{i=1}^a \binom{n_i} {4}}.$\\\\\\
Below we prove that they are unbiased and ratio consistent estimators for $\tr(\vT_S\vSigma)$ and $\tr\left(\left(\vT_S\vSigma\right)^2\right)$, respectivly, under both, the nullhypothesis and the alternative.
This allows us to define the estimated version of our test statistic by
\[W_N=\frac{Q_N-A_1\cdot \sum_{i=1}^a \frac{N}{n_i} (\vT_W)_{ii}}{\sqrt{2\Cdot A_2\cdot \sum_{i=1}^a\sum_{r=1}^a \frac{N^2}{n_i n_r} (\vT_W)_{ir}^2}}.\]

The following Lemma justifies the usage of the estimated version instead of the exact one.
\begin{theorem}\label{Theorem4}
Under $H_0(T):\vT\vmu=\vnull_{ad}$ and one of the 
frameworks \eqref{asframe1}-\eqref{asframe5} the statistic $W_N$ has the same asymptotic limit distributions as $\widetilde W_N$, if the respective conditions (a)-(b) from Theorem~\ref{Asymptotik} are fulfilled.
\vspace{-.5cm}
\end{theorem}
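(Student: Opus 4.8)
The strategy I would follow is a Slutsky/perturbation argument, since $W_N$ arises from $\widetilde W_N$ only by replacing the population functionals $\tau_1:=\tr(\vT_S\vSigma)$ and $\tau_2:=\tr((\vT_S\vSigma)^2)$, which enter the centring and scaling of $\widetilde W_N$, by the estimators $A_1$ and $A_2$. The plan has four parts: (1) quote from the following part of the paper that $A_1,A_2$ are unbiased and ratio consistent, i.e.\ $A_1/\tau_1\stoch 1$ and $A_2/\tau_2\stoch 1$ in each framework \eqref{asframe1}--\eqref{asframe5} (since $A_1,A_2$ are built from within-group differences $\vX_{i,\ell_1}-\vX_{i,\ell_2}$, the group means cancel, so these statements hold under $H_0$ and the alternative alike); (2) express $W_N$ through $\widetilde W_N$ and these ratios; (3) show that the resulting stochastic correction is negligible; and (4) invoke Theorem~\ref{Asymptotik} for the limit of $\widetilde W_N$.

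For step (2) abbreviate the known design constants $c_1:=\sum_{i=1}^a\frac{N}{n_i}(\vT_W)_{ii}$ and $c_2:=\sum_{i=1}^a\sum_{r=1}^a\frac{N^2}{n_i n_r}(\vT_W)_{ir}^2$, so that $\E_{H_0}(Q_N)=\tau_1 c_1$ and $\Var_{H_0}(Q_N)=2\tau_2 c_2$; a one-line rearrangement then gives
\[
W_N=\sqrt{\frac{\tau_2}{A_2}}\left(\widetilde W_N+R_N\right),\qquad R_N:=\frac{(\tau_1-A_1)\,c_1}{\sqrt{2\tau_2 c_2}}=\left(1-\frac{A_1}{\tau_1}\right)\frac{\E_{H_0}(Q_N)}{\sqrt{\Var_{H_0}(Q_N)}}.
\]
By (1) and the continuous mapping theorem $\sqrt{\tau_2/A_2}\stoch 1$; since $\widetilde W_N$ is stochastically bounded (it even converges in distribution by Theorem~\ref{Asymptotik}, under the relevant condition (a) or (b)), it then remains only to show $R_N\stoch 0$, after which two applications of Slutsky's lemma give that $W_N$ has the same limiting law as $\widetilde W_N$; note that only the implication ``conditions $\Rightarrow$ same limit'' is needed here.

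The heart of the matter is $R_N\stoch 0$, and mere ratio consistency of $A_1$ does not suffice, because the deterministic factor $\E_{H_0}(Q_N)/\sqrt{\Var_{H_0}(Q_N)}=\frac{1}{\sqrt2}\sum_{s=1}^{ad}\beta_s$ need not stay bounded: Cauchy--Schwarz, together with $\sum_\ell\beta_\ell^2=1$ and the fact that $\vT\vV_N\vT$ has at most $\rank(\vT)$ nonzero eigenvalues, only gives the bound $\sqrt{\rank(\vT)}$, which is not uniformly bounded across the frameworks, so a genuine \emph{rate} for $A_1-\tau_1$ is required. Here I would exploit the normality assumed in the model: $A_1$ is a function of the within-group centred observations and hence independent of $\overline{\vX}$ (equivalently of $Q_N$), and it is a convex combination $A_1=\sum_{i=1}^a w_i\,\tr(\vT_S S_i)$ of independent statistics, with $S_i$ the $i$-th group sample covariance, $w_i=n_i(n_i-1)/\sum_j n_j(n_j-1)$ and $\Var(\tr(\vT_S S_i))=2\tau_2/(n_i-1)$, so that
\[
\Var(A_1)=2\tau_2\,\frac{\sum_{i=1}^a n_i^2(n_i-1)}{\left(\sum_{i=1}^a n_i(n_i-1)\right)^2}.
\]
Feeding this into Chebyshev's inequality for $R_N$ (using $\E A_1=\tau_1$) together with the bound $c_1^2/c_2\le\rank(\vT_W)$ — which comes from writing $c_1=\tr(\vM)$ and $c_2=\tr(\vM^2)$ for the positive semidefinite $\vM=\diag(N/n_i)^{1/2}\,\vT_W\,\diag(N/n_i)^{1/2}$ — reduces $R_N\stoch 0$ to the purely deterministic convergence
\[
\rank(\vT_W)\cdot\frac{\sum_{i=1}^a n_i^2(n_i-1)}{\left(\sum_{i=1}^a n_i(n_i-1)\right)^2}\ \longrightarrow\ 0
\]
under each of \eqref{asframe1}--\eqref{asframe5}. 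Establishing this last display — i.e.\ making sure that the number of groups, which enters both through $\rank(\vT_W)$ and through $\E_{H_0}(Q_N)/\sqrt{\Var_{H_0}(Q_N)}$, is always outweighed by the ``effective'' within-group sample size $\sum_i n_i(n_i-1)$ in the denominator — is the step that genuinely uses the precise definitions of the five frameworks, and I expect it to be the main obstacle; with it in hand the remaining pieces (ratio consistency of $A_2$ and tightness of $\widetilde W_N$) are routine.
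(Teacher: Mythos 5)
Your decomposition $W_N=\sqrt{\tau_2/A_2}\,(\widetilde W_N+R_N)$ and the observation that ratio consistency of $A_1$ is \emph{not} enough --- because the deterministic factor $\E_{H_0}(Q_N)/\sqrt{\Var_{H_0}(Q_N)}$ grows like $\sqrt{c_1^2/c_2}$ --- are exactly right, and considerably more careful than the paper, whose entire proof of this theorem is the sentence that it ``follows directly from Theorem~3.2 of Sattler and Pauly (2018)''. That cited theorem is proved under $n_i/N\to\kappa_i\in(0,1)$, i.e.\ all $n_i\to\infty$, which is precisely the condition this paper advertises dropping; so your Slutsky argument is the proof that citation is standing in for. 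Your variance formula $\Var(A_1)=2\tau_2\sum_i n_i^2(n_i-1)\big/\bigl(\sum_j n_j(n_j-1)\bigr)^2$ (using normality and the Wishart variance of $\tr(\vT_S S_i)$) and the bound $c_1^2/c_2\le\rank(\vT_W)$ are both correct.

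The obstacle you flagged at the end is, however, fatal rather than technical: the required convergence $\rank(\vT_W)\cdot\sum_i n_i^2(n_i-1)\big/\bigl(\sum_j n_j(n_j-1)\bigr)^2\to 0$ is \emph{false} in frameworks \eqref{asframe1} and \eqref{asframe2}, where $n_{\max}$ stays bounded. Take $n_i\equiv n$ fixed and $\vT_W=\vP_a$ (the ``no group effect'' and ``no interaction'' hypotheses); then $c_1^2/c_2=a-1$ exactly and $\Var(R_N)=(a-1)/(a(n-1))\to 1/(n-1)>0$. This is not an artifact of the Chebyshev bound: $R_N$ is a normalized sum of $a$ independent centred group statistics, so a Lyapunov CLT gives $R_N\Rightarrow\mathcal N(0,1/(n-1))$, and since $A_1$ is a function of the within-group deviations it is independent of $\overline{\vX}$ under normality, hence of $\widetilde W_N$. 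Consequently $W_N$ converges to the convolution --- e.g.\ to $\mathcal N(0,n/(n-1))$ when $\beta_1\to 0$ --- and not to the limit of $\widetilde W_N$. (For $\vT_S=\frac1d\vJ_d$ this is exactly the classical large-$a$, small-$n$ one-way ANOVA phenomenon of \cite{bathke2002}, where the fluctuation of the pooled within-group variance estimator contributes to the limiting law.) So your strategy is the right one but cannot be completed, because the statement itself fails in frameworks \eqref{asframe1}--\eqref{asframe2} whenever $c_1^2/c_2\to\infty$ with bounded group sizes; a correct version needs either $n_{\min}\to\infty$, or $c_1^2/c_2$ bounded, or a studentization whose denominator accounts for $\Var(A_1)$.
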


Unfortunately the calculation of the standardized eigenvalues $\beta _s$ is in generally not simplified through homogeneity. Therefore it is nearly impossible to find an appropriate estimator which can be used in all our frameworks. Moreover simulations showed that large sample sizes dimension or number of groups are necessary for a good approximation, which make quantiles based on Theorem~\ref{Asymptotik} a) difficult to apply. For similar reasons, in \cite{pEB} and \cite{sattler2018}  they used the quantils of a random variable of the kind 
\bqan\label{eq:Kf}K_f=(\chi_f^2-f)/\sqrt{2f},\eqan in case of $\beta_1\to \{0,1\}$.  The choice of $f_P=\tr^3\left((\vT\vV_N)^2\right)/\tr^2\left((\vT\vV_N)^3\right)$ for  the degrees of freedom lead to a third moment approximation. In our homoscedastic model the usage of this random variable $K_f$ is based on the following theorem.
\begin{theorem}\label{Theorem5}
Under the conditions of Theorem~\ref{Asymptotik} and one of the 
frameworks \eqref{asframe1}-\eqref{asframe5}  the random variable $K_{{f_P}}$ has, under $H_0:\vT\vmu=\boldsymbol{0}_{ad}$, asymptotically 
\begin{itemize} 
\item[a)]a standard normal distribution  if $\beta_1\to 0$ as $N \to \infty$,
\item[b)]a standardized $\left(\chi_1^2-1\right)/\sqrt{2}$ distribution if $\beta_1\to 1$ as $N \to \infty$.
\end{itemize}
\vspace{-.5cm}
\end{theorem}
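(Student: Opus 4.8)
The plan is to reduce Theorem~\ref{Theorem5} to two essentially independent facts: (i) a closed-form expression for the degrees of freedom $f_P$ in terms of the standardized eigenvalues $\beta_s$ of Theorem~\ref{Asymptotik}, and (ii) the elementary limiting behaviour of the one‑parameter family $K_f=(\chi_f^2-f)/\sqrt{2f}$ as $f\to\infty$ and as $f\to 1$. Note that the argument does not really use the particular asymptotic frameworks \eqref{asframe1}--\eqref{asframe5}; they only guarantee $N\to\infty$, along which all the limits below are taken, and that $f_P$ is a deterministic sequence (it depends on the design and on $\vSigma$ through $\vV_N=\bigoplus_i \frac{N}{n_i}\vSigma$, not on the data).

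First I would exploit that $\vT$ is a projection. Cyclic invariance of the trace together with $\vT^2=\vT$ gives $\tr\!\big((\vT\vV_N)^k\big)=\tr\!\big((\vT\vV_N\vT)^k\big)=\sum_{s=1}^{ad}\lambda_s^{\,k}$ for every $k\ge 1$, where $\lambda_1\ge\lambda_2\ge\cdots\ge 0$ are the eigenvalues of the positive semidefinite matrix $\vT\vV_N\vT$ occurring in \eqref{eq: tileW}; since $\vSigma>0$ and $\vT\neq\vnull$ we have $\lambda_1>0$, so $\sum_\ell\lambda_\ell^2>0$ and the $\beta_s$ are well defined. Substituting $\lambda_s=\beta_s\big(\sum_\ell\lambda_\ell^2\big)^{1/2}$ into $f_P=\tr^3\!\big((\vT\vV_N)^2\big)\big/\tr^2\!\big((\vT\vV_N)^3\big)$, the common factor $\big(\sum_\ell\lambda_\ell^2\big)^{3}$ cancels and one obtains the identity
\[
f_P=\left(\sum_{s=1}^{ad}\beta_s^{3}\right)^{-2}.
\]
Because $0\le\beta_s\le\beta_1\le 1$ and $\sum_s\beta_s^2=1$, the inner sum satisfies $\beta_1^3\le\sum_s\beta_s^3\le\beta_1$ and also $\sum_s\beta_s^3\le\sum_s\beta_s^2=1$; hence $\beta_1^{-2}\le f_P\le\beta_1^{-6}$ and $f_P\ge 1$. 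Consequently $\beta_1\to 0$ forces $f_P\to\infty$, while $\beta_1\to 1$ sandwiches $1\le f_P\le\beta_1^{-6}\to 1$, i.e.\ $f_P\to 1$. (These are precisely the two boundary cases $b_1=0$ and $b_1=1$ in which part a) of Theorem~\ref{Asymptotik}, with $r\in\{0,1\}$, predicts a standard normal, respectively a standardized $\chi_1^2$, limit for $\widetilde W_N$, so $K_{f_P}$ mimics the correct limit there.)

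It then remains to analyse $K_f$ for the deterministic sequence $f=f_P(N)$. Writing $\chi_f^2$ as a $\mathrm{Gamma}(f/2,2)$ variable with moment generating function $(1-2t)^{-f/2}$ on $t<1/2$, one has $M_{K_f}(t)=\exp(-t\sqrt{f/2})\,(1-t\sqrt{2/f})^{-f/2}$ for $t<\sqrt{f/2}$. In case b) letting $f_P\to 1$ yields $M_{K_{f_P}}(t)\to e^{-t/\sqrt2}(1-\sqrt2\,t)^{-1/2}$ on a neighbourhood of $0$, the m.g.f.\ of $(\chi_1^2-1)/\sqrt2$, so $K_{f_P}\stackrel{\mathcal{D}}{\longrightarrow}(\chi_1^2-1)/\sqrt2$ by the continuity theorem for moment generating functions. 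In case a), with $u:=t\sqrt{2/f}\to 0$, expanding $\log(1-u)=-u-u^2/2-O(u^3)$ gives $\log M_{K_f}(t)=-t\sqrt{f/2}+\tfrac f2\big(u+\tfrac{u^2}{2}+O(u^3)\big)=t^2/2+O(f^{-1/2})\to t^2/2$, the log-m.g.f.\ of $\mathcal N(0,1)$; since $f_P\to\infty$ this gives $K_{f_P}\stackrel{\mathcal{D}}{\longrightarrow}\mathcal N(0,1)$.

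The argument is mostly bookkeeping once the identity $f_P=\big(\sum_s\beta_s^3\big)^{-2}$ is established. The one point that needs a little care is that the degrees of freedom $f_P$ are in general non‑integer, so $\chi_{f_P}^2$ must be read as a Gamma law and the convergence handled via moment generating (or characteristic) functions rather than by decomposing into independent squared normals; one must also observe that in case b) the sandwich $1\le f_P\le\beta_1^{-6}$ delivers genuine convergence $f_P\to 1$ (not merely boundedness), which is what the continuity theorem requires. Verifying the trace/eigenvalue identity rigorously (including that $\vT\vV_N\vT$ is positive semidefinite with $\lambda_1>0$) is the other place where the projection and positive‑definiteness hypotheses are genuinely used.
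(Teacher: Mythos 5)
Your proposal is correct, and it is worth noting that the paper itself contains no written proof of Theorem~\ref{Theorem5}: the appendix skips from the lemma on $A_2$ straight to Theorem~\ref{Theorem4}, and the text only remarks that the connection ``$\beta_1\to 0$ iff $f_P\to\infty$ and $\beta_1\to 1$ iff $f_P\to 1$'' was investigated in the cited work for $a=1$ and ``translates to the present framework.'' Your argument supplies exactly the two ingredients that this remark presupposes, and both are sound. The identity $\tr\big((\vT\vV_N)^k\big)=\sum_s\lambda_s^k$ via $\vT^2=\vT$ and cyclicity, hence $f_P=\big(\sum_s\beta_s^3\big)^{-2}$, together with the sandwich $\beta_1^{-2}\le f_P\le\beta_1^{-6}$ (using $\sum_s\beta_s^2=1$ and $0\le\beta_s\le\beta_1\le1$), gives the two implications $\beta_1\to0\Rightarrow f_P\to\infty$ and $\beta_1\to1\Rightarrow f_P\to1$ cleanly and in a way that is manifestly independent of which of the frameworks \eqref{asframe1}--\eqref{asframe5} is in force — which is the whole content of the ``translates to the present framework'' claim. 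The moment-generating-function computations for $K_f$ in the two regimes are routine and correctly handle the non-integer degrees of freedom and the domain issue near $f=1$. So your write-up is a correct, self-contained substitute for the proof the paper delegates to the literature, and it follows what is evidently the intended route; the only thing it adds beyond that route is making the dependence on the design explicit through the exact formula $f_P=\big(\sum_s\beta_s^3\big)^{-2}$, which also transparently recovers the paper's observation that $f_P\ge1$ always holds.
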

With the well known rules for the kronecker product and traces we can decompose this number by
 %\[f_P=\frac{\tr^3\left(\left(\vT_S\vSigma\right)^2\right)}
%{\tr^2\left(\left(\vT_S\vSigma\right)^3\right)}\cdot 
%\overbrace{\frac{\left(\sum_{k_1,k_2=1 }^a \left[(T_W)_{k_1 k_2} \right]^2\frac{N}{n_{k_1}} \frac{N}{n_{k_2}}\right)^ 3}
%{ \left(\sum_{i=1}^a \frac{N}{n_i} \sum_{\ell_1=1}^a \frac{N}{n_{\ell_1}}(\vT_W)_{i\ell_1}\sum_{\ell_2=1}^a \frac{N}{n_{\ell_2}} (\vT_W)_{\ell_1 \ell_2}(\vT_W)_{i\ell_1}\right)^2}}^{\eta_{N,a}}.\]

\[f_P=\frac{\tr^3\left(\left(\vT_S\vSigma\right)^2\right)}
{\tr^2\left(\left(\vT_S\vSigma\right)^3\right)}\cdot 
\frac{\tr^3\left( \left[\diag(N/n_1,...,N/n_a)\cdot \vT_W \right]^2\right)}{\tr^2\left( \left[\diag(N/n_1,...,N/n_a)\cdot \vT_W \right]^3\right)}
=: \frac{\tr^3\left(\left(\vT_S\vSigma\right)^2\right)}
{\tr^2\left(\left(\vT_S\vSigma\right)^3\right)}\cdot
{\eta_{N,a}}.\]\\\\

The connection between $f_P$ and $\beta_1$ in the two extreme cases, i.e. $\beta_1\to 0$ if and only $f_P\to \infty$ and $\beta_1\to 1$ if and only if $f_P\to 1$, have been investigated in \cite{pEB} for the case of $a=1$ but also translate to the present framework.\\

Here we have to estimate the first part, while the second one  $\eta_{N,a}$ just depends on the asymptotic setting and therefore is known. This allows us to use the same estimated traces for different hypothesis which  differ only in $\vT_W$.\\
Moreover, for  $\eta_{N,a}\to \infty$, we also have $f_P\to \infty$, without estimation, because ${\tr^3\left(\left(\vT_S\vSigma\right)^2\right)}/
{\tr^2\left(\left(\vT_S\vSigma\right)^3\right)}\geq 1$. Otherwise, however, the behaviour of $f_P$ is unclear and we have to find consistent estimators for $\tr\left(\left(\vT_S\vSigma\right)^3\right)$ in all our different frameworks. This achieved by considering the class of estimators \[C_{i,1}:= \frac{1}{8}\sum\limits_{\ell_1\neq ...\neq \ell_6=1}^{n_i} \vY_{i,\ell_1,\ell_2}^\top \vY_{i,\ell_3,\ell_4}\vY_{i,\ell_3,\ell_4}^\top \vY_{i,\ell_5,\ell_6}\vY_{i,\ell_5,\ell_6}^\top \vY_{i,\ell_1,\ell_2}, \quad \vY_{i,\ell_1,\ell_2}:=\vT_S(\vX_{i,\ell_1}-\vX_{i,\ell_2}), \]
which are based on suitable symmetrized U-statistics, while $\ell_1\neq \ell_2\neq ... \neq \ell_6$ means that all indices are different.\\
 Afterwards these estimators for each individual group  are combined, to get an estimator which uses the observations of each group, given by

\[C_1:=\frac{1}{6! \cdot \sum_{j=1}^a \binom{n_j}{6}}\sum\limits_{i=1}^aC_{i,1}.\]

Together with the estimators from above, we can construct a consistent estimator for $f_P $ by $\widehat f_P:={A_2^3}/{C_1^2}\cdot \eta_{N,a}$.

\begin{theorem}\label{Schaetzerf}
In all our frameworks  \eqref{asframe1}-\eqref{asframe5},  it holds that
\begin{itemize}
\item[i)]
$C_{1}$ is an unbiased  estimator for $\tr\left(\left(\vT_S\vSigma\right)^3\right)$,
\item[ii)]
$\left(\widehat f_P\right)^{-1}-\left(f_P\right)^{-1} \stackrel{\mathcal P}{\to} 0$,

\end{itemize}
where $\mathcal P$ denotes convergence in propabilty.
\end{theorem}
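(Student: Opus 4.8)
\emph{Proof plan.} Write $\tau_k:=\tr\big((\vT_S\vSigma)^k\big)$ and $B:=\vT_S\vSigma\vT_S$, which is positive semidefinite with $\tr(B^k)=\tau_k$, and recall that $\vY_{i,\ell_1,\ell_2}=\vT_S(\vX_{i,\ell_1}-\vX_{i,\ell_2})\sim\mathcal N_d(\vnull,2B)$ for $\ell_1\neq\ell_2$, with a law that does not depend on $\vmu_i$; this is why both assertions will hold under $H_0(\vT)$ and under the alternative. Note also $\tau_3^2\le\tau_2^3$ and $\eta_{N,a}\ge1$ (each is the inequality $\|\cdot\|_3\le\|\cdot\|_2$ applied to the nonnegative eigenvalues of $B$, respectively of $\diag(N/n_1,\dots,N/n_a)\vT_W$, the latter being conjugate to a positive semidefinite matrix), so that $(f_P)^{-1}=\tau_3^2/(\tau_2^3\eta_{N,a})\in(0,1]$. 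For \textbf{(i)}, rewrite the scalar kernel of $C_{i,1}$ as a trace,
\[
\vY_{i,\ell_1,\ell_2}^\top\vY_{i,\ell_3,\ell_4}\vY_{i,\ell_3,\ell_4}^\top\vY_{i,\ell_5,\ell_6}\vY_{i,\ell_5,\ell_6}^\top\vY_{i,\ell_1,\ell_2}=\tr\big(\vY_{i,\ell_1,\ell_2}\vY_{i,\ell_1,\ell_2}^\top\,\vY_{i,\ell_3,\ell_4}\vY_{i,\ell_3,\ell_4}^\top\,\vY_{i,\ell_5,\ell_6}\vY_{i,\ell_5,\ell_6}^\top\big),
\]
and observe that for six pairwise distinct indices the pairs $\{\ell_1,\ell_2\},\{\ell_3,\ell_4\},\{\ell_5,\ell_6\}$ rest on disjoint subjects, so the three rank-one matrices are independent with common mean $2B$; hence the kernel has expectation $\tr\big((2B)^3\big)=8\tau_3$. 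There being $6!\binom{n_i}{6}$ admissible ordered tuples, $\E[C_{i,1}]=\tfrac18\cdot 6!\binom{n_i}{6}\cdot 8\tau_3=6!\binom{n_i}{6}\tau_3$, and summing over $i$ and dividing by $6!\sum_j\binom{n_j}{6}$ yields $\E[C_1]=\tau_3$. (Homoscedasticity is what makes the pooled target a single $\tau_3$ rather than a weighted mean of the $\tr((\vT_S\vSigma_i)^3)$.)

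For \textbf{(ii)}, since $A_2/\tau_2\stackrel{\mathcal P}{\to}1$ is already available and $\widehat f_P=A_2^3C_1^{-2}\eta_{N,a}$, $f_P=\tau_2^3\tau_3^{-2}\eta_{N,a}$, one has the exact identity
\[
(\widehat f_P)^{-1}-(f_P)^{-1}=(f_P)^{-1}\left(\frac{(C_1/\tau_3)^2}{(A_2/\tau_2)^3}-1\right).
\]
By unbiasedness from (i) and Chebyshev, $C_1/\tau_3=1+O_{\mathcal P}\big(\sqrt{v_N}\big)$ with $v_N:=\Var(C_1)/\tau_3^2$, so $(C_1/\tau_3)^2=1+O_{\mathcal P}(v_N\vee\sqrt{v_N})$; combined with $(A_2/\tau_2)^{-3}=1+o_{\mathcal P}(1)$, the bracket above is $O_{\mathcal P}(v_N\vee\sqrt{v_N})+o_{\mathcal P}(1)$. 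Since $(f_P)^{-1}\le1$, Slutsky's lemma then reduces everything to proving that the \emph{deterministic} sequence $(f_P)^{-1}(v_N\vee\sqrt{v_N})$ tends to $0$ in each framework \eqref{asframe1}--\eqref{asframe5}.

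To bound $v_N$, use group independence, $\Var(C_1)=(6!\sum_j\binom{n_j}{6})^{-2}\sum_i\Var(C_{i,1})$, and the fact that each $C_{i,1}$ is a constant multiple of a degree-$6$ $U$-statistic: I would expand $\Var(C_{i,1})=\tfrac1{64}\sum_{\vec\ell,\vec m}\Cov\big(g(\vX_{i,\vec\ell}),g(\vX_{i,\vec m})\big)$ over ordered sixtuples, note that $\Cov$ vanishes unless the two tuples overlap and that there are $O(n_i^{12-k})$ pairs overlapping in exactly $k$ positions, and evaluate each Gaussian covariance by conditioning on the shared subjects and applying Isserlis' theorem. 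A degree count — each factor $\E[\vY\vY^\top]=2B$ contributing one power of $B$ and the total degree being $6$, while differencing never produces an isolated factor of the identity, so no $\tau_1=\tr(\vT_S\vSigma)$ ever appears — shows that the only trace products that occur are those indexed by partitions of $6$ into parts $\ge2$, i.e.\ $\tau_2^3,\ \tau_2\tau_4,\ \tau_3^2,\ \tau_6$; here $\tau_3^2$ is matched by $\E g\cdot\E g'$ and cancels, while $\tau_6\le\tau_2^3$ and $\tau_4\le\tau_2^2$ give $P\le\tau_2^3$ for every surviving product $P$. Hence $\Var(C_{i,1})\le c\,n_i^{11}\tau_2^3$ and
\[
v_N\;\le\;c'\,\frac{\tau_2^3}{\tau_3^2}\cdot\frac{\sum_i n_i^{11}}{\big(\sum_j n_j^6\big)^2}\;=:\;c'\,\frac{\tau_2^3}{\tau_3^2}\,\varepsilon_N .
\]
Consequently $(f_P)^{-1}v_N\lesssim\varepsilon_N$ and $(f_P)^{-1}\sqrt{v_N}\lesssim(\tau_3^2/\tau_2^3)^{1/2}\sqrt{\varepsilon_N}\le\sqrt{\varepsilon_N}$, and it only remains to see $\varepsilon_N\to0$: from $\sum_i n_i^{11}\le n_{\max}^5\sum_i n_i^6$, $\sum_j n_j^6\ge n_{\max}^6$, and $\sum_j n_j^6\gtrsim a$ one gets $\varepsilon_N\le\min\big(n_{\max}^{-1},\,C\,n_{\max}^{11}/a\big)$, which vanishes because every one of \eqref{asframe1}--\eqref{asframe5} forces $a\to\infty$ or $n_{\max}\to\infty$.

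The main obstacle is the variance expansion of the third step: organising the overlap patterns of the two sixtuples, carrying out the conditional Gaussian moment computations, and — the decisive technical point — verifying that, thanks to the differencing, no trace product worse than $\tau_2^3$ arises, so that the crude bound $v_N\lesssim(\tau_2^3/\tau_3^2)\varepsilon_N$ is available. The second, more structural, subtlety is that one must \emph{not} attempt to prove $v_N\to0$ (which fails when $d$ outgrows $n_{\max}$): it is only the product $(f_P)^{-1}(v_N\vee\sqrt{v_N})$ that vanishes, the large factor $\tau_2^3/\tau_3^2$ in $v_N$ being exactly absorbed by the small factor $\tau_3^2/\tau_2^3$ in $(f_P)^{-1}$, which is precisely why the statement is phrased for reciprocals of the degrees of freedom. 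The required combinatorial and trace estimates are of the same nature as those in \cite{pEB} and \cite{sattler2018}.
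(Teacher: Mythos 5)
Your proposal is correct and follows essentially the same route as the paper: unbiasedness via the independence of the three disjointly-differenced rank-one factors with common mean $2\vT_S\vSigma\vT_S$, and consistency via a Chebyshev argument in which $\Var(C_1)$ is bounded by $\bigl(\sum_i n_i^{11}/(\sum_j n_j^6)^2\bigr)\cdot\0\bigl(\tr^3((\vT_S\vSigma)^2)\bigr)$, the trace inequality $\tr^2((\vT_S\vSigma)^3)\le\tr^3((\vT_S\vSigma)^2)$ absorbs the normalization, the two cases $n_{\max}\to\infty$ and $n_{\max}$ bounded with $a\to\infty$ are treated separately, and the ratio consistency of $A_2$ is fed in at the end. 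The only differences are cosmetic — you use the multiplicative identity $(\widehat f_P)^{-1}-(f_P)^{-1}=(f_P)^{-1}\bigl((C_1/\tau_3)^2(A_2/\tau_2)^{-3}-1\bigr)$ where the paper factors a difference of squares with the bounded sum factor $\tau_3/\tau_2^{3/2}+C_1/\tau_2^{3/2}$ — and you sketch the Wick/overlap computation of $\Var(C_{i,1})$ that the paper simply imports from Sattler and Pauly (2018).
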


Through the usage of U-statistics with a kernel of order 6,  for each estimator $C_{1} $, $6!\cdot\binom {n_i}{6}$ summations have to be done. In contrast, estimators based on observations from all groups would require much higher numbers. For example in \cite{sattler2018} $\prod_{i=1}^a 6!\cdot \binom {n_i}{6}$ summations are necessary. Due to homogeneity, we don't need this kind of estimator, but  $C_{1,i}$ also requires  $6!\cdot \sum_{j=1}^a \binom {n_j}{6}$ summations, which is already really high, even for comparatively small samples sizes or numbers of groups. Thus, as in \cite{sattler2018}, the usage of subsampling versions of our estimators is reasonable to make them applicable in practice. The underlying idea is instead of summing up all possible index combinations of one group we just do this for a randomly chosen subset of combinations.  \\
}

%Although compared to \cite{sattler2018} the homogeneity decreases the number of required summations for this estimator substantially, even for
%a moderate number of groups and small sample sizes, $6!\cdot \sum_{j=1}^a \binom {n_j}{6}$ is really high. For this reason also, in this case, it is advisable to use a subsampling version of this estimator.\\\\
To define the subsampling version, it is first necessary to introduce some definitions and notations.
A parameter $\upsilon\in (0,\infty)$ is chosen and used to define $w_i=\big\lceil \upsilon\Cdot \binom{n_i}{6}\big\rceil, i=1,...,a$ as the number of subsampling repetitions  done for the i-th group. It is clear that the choice of $\upsilon$ has a great influence on the calculation time and accuracy,  so it should be chosen suitable for the situation.

Then,   random subsamples $\vsigma_i(b) =\{\sigma_{1i}(b),\dots,\sigma_{6i}(b)\}$ of length $6$ from $\{1,\dots,n_i\}$ are drawn independently for each $i=1,\dots,a$ and $b=1,\dots,w_i$, to define the subsampling version of $C_{i,1}$   by
\[C_{i,1}^\star=C_{i,1}^\star(w_i)=\sum\limits_{b=1}^{w_i} \Lambda_1(\vsigma_i(b))\cdot\Lambda_2(\vsigma_i(b))\Cdot \Lambda_3(\vsigma_i(b)). \]

Here \[\Lambda_1(\ell_1,\ell_2,\ell_3,\ell_4,\ell_5,\ell_6)=\vY_{i,\ell_1,\ell_2}^\top\vY_{i,\ell_3,\ell_4},\]
\[\Lambda_2(\ell_1,\ell_2,\ell_3,\ell_4,\ell_5,\ell_6)=\vY_{i,\ell_3,\ell_4}^\top\vY_{i,\ell_5,\ell_6},\]
\[\Lambda_3(\ell_1,\ell_2,\ell_3,\ell_4,\ell_5,\ell_6)=\vY_{i,\ell_5,\ell_6}^\top \vY_{i,\ell_1,\ell_2}.\]\\

Combining them, allows to define the subsamling version of $C_1$ by 
\[C_1^\star:=\frac{1}{8\Cdot  \sum_{j=1}^a w_i}\Cdot\sum\limits_{i=1}^a C_{i,1}^\star(w_i).\]
\begin{theorem}\label{Schaetzerfstar}

For $\sum_{i=1}^a w_i\to \infty$, if $N\to \infty$ (which includes frameworks  \eqref{asframe1}-\eqref{asframe5}) it holds:
\begin{itemize}
\item[a)]
 $C_1^\star$ is  unbiased for $\tr\left(\left(\vT_S\vSigma\right)^3\right)$ .
\item[b)] $\widehat f_P^\star:=\frac{{A_2^3}}{(C_1^\star)^2}\cdot \eta_{N,a}\ $ fullfilles $\ \left(\widehat f_P^\star\right)^{-1}-\left(f_P\right)^{-1}\stackrel{\mathcal P}{\to}0.$
\end{itemize}
\end{theorem}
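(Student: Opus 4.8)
The plan is to split the statement and, in both parts, reduce everything to Theorem~\ref{Schaetzerf} together with the one new feature $\sum_{i=1}^a w_i\to\infty$; throughout put $\mathcal D=\{\vX_{i,j}\}$, $\tau_2=\tr((\vT_S\vSigma)^2)$ and $\tau_3=\tr((\vT_S\vSigma)^3)$. For a) I would condition on $\mathcal D$: by construction a single draw $\vsigma_i(b)$ is set up so that $\Lambda_1\Lambda_2\Lambda_3(\vsigma_i(b))$ has conditional mean equal to the average of $\Lambda_1\Lambda_2\Lambda_3$ over the $6!\binom{n_i}{6}$ index configurations occurring in $C_{i,1}=\tfrac18\sum\Lambda_1\Lambda_2\Lambda_3$; hence $\E[C_{i,1}^\star(w_i)\mid\mathcal D]=8w_i C_{i,1}/(6!\binom{n_i}{6})$ and
\[
\E[C_1^\star\mid\mathcal D]=\frac{1}{\sum_{j}w_j}\sum_{i=1}^{a}\frac{w_i}{6!\binom{n_i}{6}}\,C_{i,1}.
\]
Taking expectations and inserting $\E[C_{i,1}]=6!\binom{n_i}{6}\tau_3$ -- the one-group case of Theorem~\ref{Schaetzerf}\,i), equivalently the elementary Gaussian identity $\E[\Lambda_1\Lambda_2\Lambda_3]=8\tau_3$ -- makes all weights cancel, so $\E[C_1^\star]=\tau_3$, which is a).

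For b) I would first reduce the claim to the ratio consistency $C_1^\star/\tau_3\stackrel{\mathcal P}{\to}1$. Writing $\widehat f_P^\star$ and $f_P$ out,
\[
\big(\widehat f_P^\star\big)^{-1}-\big(f_P\big)^{-1}=\frac{1}{\eta_{N,a}}\left(\frac{(C_1^\star)^2}{A_2^3}-\frac{\tau_3^2}{\tau_2^3}\right),
\]
and since $A_2/\tau_2\stackrel{\mathcal P}{\to}1$ is already available, ratio consistency of $C_1^\star$ would give $(C_1^\star)^2/A_2^3=(\tau_3^2/\tau_2^3)(1+o_{\mathcal P}(1))$, whence the bracket equals $(\tau_3^2/\tau_2^3)\,o_{\mathcal P}(1)$; because $\eta_{N,a}\ge1$ and $\tau_3^2/\tau_2^3\le1$ (both instances of the inequality $p_2^3\ge p_3^2$ for the eigenvalues of a PSD matrix), the whole expression is $o_{\mathcal P}(1)$. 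In particular no case distinction on whether $\eta_{N,a}\to\infty$ is required.

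It remains to show $\Var(C_1^\star)=o(\tau_3^2)$; then Chebyshev and a) give the desired ratio consistency. I would use the decomposition $\Var(C_1^\star)=\Var\!\big(\E[C_1^\star\mid\mathcal D]\big)+\E\!\big[\Var(C_1^\star\mid\mathcal D)\big]$. For the first term, the display above writes $\E[C_1^\star\mid\mathcal D]=\sum_i q_i^\star U_{n_i}$ with $U_{n_i}:=C_{i,1}/(6!\binom{n_i}{6})$ and $q_i^\star=w_i/\sum_j w_j$, while $C_1=\sum_i q_i U_{n_i}$ with $q_i=\binom{n_i}{6}/\sum_j\binom{n_j}{6}$; from $w_i=\lceil\upsilon\binom{n_i}{6}\rceil\le(\upsilon+1)\binom{n_i}{6}$ one gets $q_i^\star\le\tfrac{\upsilon+1}{\upsilon}q_i$, so by independence of the groups $\Var(\E[C_1^\star\mid\mathcal D])=\sum_i(q_i^\star)^2\Var(U_{n_i})\le\big(\tfrac{\upsilon+1}{\upsilon}\big)^2\Var(C_1)=o(\tau_3^2)$ by the variance bound underlying Theorem~\ref{Schaetzerf}. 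For the second term, conditionally on $\mathcal D$ the summands $\Lambda_1\Lambda_2\Lambda_3(\vsigma_i(b))$ are independent over $i$ and $b$, hence $\Var(C_1^\star\mid\mathcal D)=\tfrac{1}{64(\sum_j w_j)^2}\sum_i w_i\Var_{\vsigma}(\Lambda_1\Lambda_2\Lambda_3\mid\mathcal D)$; bounding the conditional variance by the conditional second moment, taking expectations, and using $\sum_j w_j\ge\upsilon\sum_j\binom{n_j}{6}$ gives
\[
\E\!\big[\Var(C_1^\star\mid\mathcal D)\big]\le\frac{\E[(\Lambda_1\Lambda_2\Lambda_3)^2]}{64\sum_j w_j}\le\frac{1}{\upsilon}\left(\frac{\zeta_6}{\sum_j\binom{n_j}{6}}+\frac{\tau_3^2}{\sum_j\binom{n_j}{6}}\right),
\]
where a Wick computation yields $\E[(\Lambda_1\Lambda_2\Lambda_3)^2]=64(\zeta_6+\tau_3^2)$ with $\zeta_6$ the variance of the symmetrised degree-$6$ kernel. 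Since $\zeta_6/\sum_j\binom{n_j}{6}$ is exactly the order-$6$ Hoeffding contribution to $\Var(C_1)$, it is $\le\Var(C_1)=o(\tau_3^2)$; and $\sum_j\binom{n_j}{6}\to\infty$ under each of \eqref{asframe1}--\eqref{asframe5}, so the remaining summand is $o(\tau_3^2)$ too. Hence $\Var(C_1^\star)=o(\tau_3^2)$ and b) follows.

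The hard part is the bound on $\E[\Var(C_1^\star\mid\mathcal D)]$: one has to see that the extra variability created by the random subsampling is, up to the fixed constant $\upsilon^{-1}$ and the lower-order term $\tau_3^2/\sum_j\binom{n_j}{6}$, precisely the order-$6$ Hoeffding/Wick quantity already controlled in the proof of Theorem~\ref{Schaetzerf}, so that the subsampling condition $\sum_i w_i\to\infty$ is exactly what forces it to vanish; everything else is routine, the only delicate point being that the ceiling in $w_i=\lceil\upsilon\binom{n_i}{6}\rceil$ only distorts the weights by a uniform constant factor and is therefore harmless in all five frameworks.
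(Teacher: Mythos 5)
Part a) of your argument is correct and is essentially the paper's own computation, just organized through conditioning on the data $\mathcal D$; the weights cancel exactly as you say.

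Part b) has a genuine gap. You reduce the claim to the ratio consistency $C_1^\star/\tau_3\stackrel{\mathcal P}{\to}1$ with $\tau_3=\tr\left(\left(\vT_S\vSigma\right)^3\right)$, and for that you need $\Var(C_1^\star)=o(\tau_3^2)$, which you obtain by asserting $\Var(C_1)=o(\tau_3^2)$. That is not what Theorem~\ref{Schaetzerf} provides: the variance of $C_1$ (and of the degree-$6$ kernel, hence your $\zeta_6$) is of order $\tau_2^3=\tr^3\left(\left(\vT_S\vSigma\right)^2\right)$ times the combinatorial factor, and the paper accordingly only proves $(C_1-\tau_3)/\tau_2^{3/2}\stackrel{\mathcal P}{\to}0$. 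Since $\tau_3^2/\tau_2^3$ is the reciprocal of the $S$-part of $f_P$ and can tend to $0$ (this is exactly the regime $\beta_1\to 0$), the statement $\Var(C_1)=o(\tau_3^2)$ is strictly stronger and false in general: in framework \eqref{asframe2} with $n_i\equiv n_0$ fixed, $\vT_S\vSigma$ having bounded spectrum and $d/a\to\infty$, one has $\Var(C_1)\asymp d^3/a\gg d^2\asymp\tau_3^2$. So neither $C_1$ nor $C_1^\star$ is ratio consistent for $\tau_3$, and the multiplicative step ``$(C_1^\star)^2/A_2^3=(\tau_3^2/\tau_2^3)(1+o_{\mathcal P}(1))$'' is unavailable.

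The repair is only a change of normalization, after which your route coincides with the paper's. Your law-of-total-variance decomposition is sound and, with $\tau_2^3$ in place of $\tau_3^2$ on the right-hand sides, gives $\Var(C_1^\star)\le C\,\Var(C_1)+\0(\tau_2^3)/\sum_i w_i=o(\tau_2^3)$ once $\sum_i w_i\to\infty$, hence $(C_1^\star-\tau_3)/\tau_2^{3/2}\stackrel{\mathcal P}{\to}0$. Then, instead of factoring out $\tau_3^2/\tau_2^3$, use
\[
\frac{(C_1^\star)^2-\tau_3^2}{\tau_2^{3}}=\left(\frac{C_1^\star-\tau_3}{\tau_2^{3/2}}\right)\left(\frac{C_1^\star-\tau_3}{\tau_2^{3/2}}+\frac{2\tau_3}{\tau_2^{3/2}}\right),
\]
which is $o_{\mathcal P}(1)$ because $\tau_3\le\tau_2^{3/2}$; combined with $A_2/\tau_2\stackrel{\mathcal P}{\to}1$ and $\eta_{N,a}\ge 1$ this yields b). This difference-of-squares argument is exactly how the paper concludes (it establishes the two variance bounds in the cases $n_{\max}\to\infty$ and $a\to\infty$ and then refers back to the proof of Theorem~\ref{Schaetzerf}), so apart from the incorrect $\tau_3$-normalization your approach is the same as the paper's.
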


This way  of defining the number of subsampling repetitions
 $w_i$,  guarantees that the relation between the  subsampled parts $C_{1,i}^\star$ resembles the relation between the original $C_{1,i}$. Although this can lead to great differences between the subsampling sizes for the different groups, it ensures that the influence of single groups is not too big. \\\\

These results allow to formulate a more useable version of $K_{f_P}$ through the following theorem.
\begin{theorem}
The results of  Theorem~\ref{Theorem5} remains valid if $f_P$ is replaced by $\widehat f_P$ or $\widehat f_P^\star $.
\vspace{-.5cm}
\end{theorem}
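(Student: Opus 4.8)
The plan is to deduce the statement entirely from three things already in hand: the limit result for the deterministic family $K_{f_P}$, i.e.\ Theorem~\ref{Theorem5}; the already cited equivalences $\beta_1\to 0\Leftrightarrow f_P\to\infty$ and $\beta_1\to 1\Leftrightarrow f_P\to 1$; and the ratio consistency statements $(\widehat f_P)^{-1}-(f_P)^{-1}\stackrel{\mathcal P}{\to}0$ (Theorem~\ref{Schaetzerf}(ii)) and $(\widehat f_P^\star)^{-1}-(f_P)^{-1}\stackrel{\mathcal P}{\to}0$ (Theorem~\ref{Schaetzerfstar}(b)). Throughout, $K_{\widehat f_P}$ has to be read as the plug-in variable whose conditional law given the sample $\vX_{1,1},\dots,\vX_{a,n_a}$ is that of $(\chi_{\widehat f_P}^2-\widehat f_P)/\sqrt{2\widehat f_P}$, the artificial $\chi^2$-variable being drawn independently of the data; since $A_2\ge 0$ (a sum of squares over a positive denominator) and $\eta_{N,a}>0$, one has $\widehat f_P\ge 0$ and $\widehat f_P^\star\ge 0$, so on the asymptotically negligible event where either equals $0$ we may fix $K$ at an arbitrary value.

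First I would record the two elementary properties of $f\mapsto K_f=(\chi_f^2-f)/\sqrt{2f}$, $f\in(0,\infty)$. Its characteristic function $\varphi_f(t)=e^{-it\sqrt{f/2}}\bigl(1-it\sqrt{2/f}\bigr)^{-f/2}$ is jointly continuous in $(f,t)$ on $(0,\infty)\times\R$, satisfies $|\varphi_f(t)|\le 1$, and a Taylor expansion of $\log\varphi_f(t)$ in the small quantity $t\sqrt{2/f}$ gives $\varphi_f(t)\to e^{-t^2/2}$ as $f\to\infty$. Equivalently, $K_f\Rightarrow\mathcal N(0,1)$ as $f\to\infty$, while $K_f\Rightarrow(\chi_1^2-1)/\sqrt2$ as $f\to 1$ (here just by continuity at $f=1$).

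Now the two regimes of Theorem~\ref{Theorem5}. If $\beta_1\to 0$, then $f_P\to\infty$, hence $(\widehat f_P)^{-1}=(f_P)^{-1}+o_{\mathcal P}(1)\stackrel{\mathcal P}{\to}0$, and because $\widehat f_P\ge 0$ this forces $\widehat f_P\stackrel{\mathcal P}{\to}\infty$. Conditioning on the data, the unconditional characteristic function of $K_{\widehat f_P}$ is exactly $\E[\varphi_{\widehat f_P}(t)]$. From $\varphi_f(t)\to e^{-t^2/2}$ as $f\to\infty$ together with $\widehat f_P\stackrel{\mathcal P}{\to}\infty$ we get $\varphi_{\widehat f_P}(t)\stackrel{\mathcal P}{\to}e^{-t^2/2}$ for every fixed $t$; the uniform bound $|\varphi_{\widehat f_P}(t)|\le 1$ then gives $\E[\varphi_{\widehat f_P}(t)]\to e^{-t^2/2}$ by bounded convergence, and L\'evy's continuity theorem yields $K_{\widehat f_P}\Rightarrow\mathcal N(0,1)$, which is part a). If instead $\beta_1\to 1$, then $f_P\to 1$, so $(\widehat f_P)^{-1}\stackrel{\mathcal P}{\to}1$ and $\widehat f_P\stackrel{\mathcal P}{\to}1$; continuity of $f\mapsto\varphi_f(t)$ at $f=1$ gives $\varphi_{\widehat f_P}(t)\stackrel{\mathcal P}{\to}\varphi_1(t)$, hence $\E[\varphi_{\widehat f_P}(t)]\to\varphi_1(t)$, hence $K_{\widehat f_P}\Rightarrow(\chi_1^2-1)/\sqrt2$, which is part b). Replacing Theorem~\ref{Schaetzerf}(ii) by Theorem~\ref{Schaetzerfstar}(b) repeats the argument verbatim for $\widehat f_P^\star$ (and $\sum_i w_i\to\infty$ holds under each of \eqref{asframe1}--\eqref{asframe5}).

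The only genuinely delicate point is the plug-in bookkeeping: one must keep the artificial $\chi^2$-randomness independent of the sample so that the characteristic function of $K_{\widehat f_P}$ is precisely $\E[\varphi_{\widehat f_P}(t)]$, and then combine ``$\varphi_f(t)\to\text{limit}$ as $f\to f_\infty$'' with ``$\widehat f_P\stackrel{\mathcal P}{\to}f_\infty$'' into ``$\varphi_{\widehat f_P}(t)\stackrel{\mathcal P}{\to}\text{limit}$'' before invoking bounded convergence. No new probabilistic input beyond Theorems~\ref{Theorem5}, \ref{Schaetzerf} and \ref{Schaetzerfstar} is required. (A c.d.f.-based variant works just as well: $f\mapsto F_f$ is continuous and the limit c.d.f.'s are continuous, so by P\'olya's theorem $F_f\to F_{f_\infty}$ uniformly, whence $\sup_x|F_{\widehat f_P}(x)-F_{f_\infty}(x)|\stackrel{\mathcal P}{\to}0$, which again passes to the mixture by bounded convergence.)
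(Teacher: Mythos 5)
Your argument is correct and complete. Note that the paper itself does not actually include a proof of this theorem in the appendix (nor of Theorem~\ref{Theorem5}); it is left implicit, presumably by reference to the analogous arguments in Sattler and Pauly (2018). Your write-up therefore supplies exactly the missing justification, and it does so by the standard route: interpret $K_{\widehat f_P}$ as a mixture (artificial $\chi^2$-randomness independent of the data), reduce everything to $\widehat f_P\stackrel{\mathcal P}{\to}\infty$ resp.\ $\widehat f_P\stackrel{\mathcal P}{\to}1$ via Theorems~\ref{Schaetzerf}(ii)/\ref{Schaetzerfstar}(b) and the stated equivalences between $\beta_1$ and $f_P$, and then pass the deterministic limits $K_f\Rightarrow\mathcal N(0,1)$ ($f\to\infty$) and $K_f\Rightarrow(\chi_1^2-1)/\sqrt2$ ($f\to1$) through the mixture by bounded convergence of characteristic functions and L\'evy's continuity theorem. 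All the individual steps check out: $(\widehat f_P)^{-1}\stackrel{\mathcal P}{\to}0$ together with $\widehat f_P\ge 0$ does force $\widehat f_P\stackrel{\mathcal P}{\to}\infty$; the composition of a deterministic limit in $f$ with convergence in probability of $\widehat f_P$ is the standard $\eps$--$M$ argument; and the uniform bound $|\varphi_f(t)|\le 1$ legitimizes the exchange of limit and expectation. Your closing remark is also the practically relevant one: for the test $\varphi_N^\star=\ind\{W_N>K_{\widehat f_P;1-\alpha}\}$ what is really needed is convergence of the plug-in quantiles, and your P\'olya/c.d.f.\ variant delivers $\sup_x|F_{\widehat f_P}(x)-F_{f_\infty}(x)|\stackrel{\mathcal P}{\to}0$ and hence $K_{\widehat f_P;1-\alpha}\stackrel{\mathcal P}{\to}$ the limit quantile, which is the form of the statement one actually uses. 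No gap.
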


For the estimation of the unknown traces, it would also be possible to construct estimators that use observations from different groups. This is feasible and seems to be reasonable but in practice, we would again need subsampling versions of these estimators, which take care of the structure of the dataset. This is really complicated and therefore not usable in practice. So we avoid these difficulties by using estimators for the separate groups and combine them afterward.\\\\
{Relaxing the assumption of homogeneous covariance matrices to $\vX_{ij}\sim \mathcal{N}_d(\vmu_i,\vSigma_i)$ with $\vT_S\vSigma_1= \vT_S\vSigma_2=...= \vT_S\vSigma_a$, which is essentially easier to fulfill, wouldn't change the  validity of the previous results. From a theoretical point of view it would be even sufficient to assume $\tr\left(\left(\vT_S\vSigma_1\right)^j\right)= \tr\left(\left(\vT_S\vSigma_2\right)^j\right)=...= \tr\left(\left(\vT_S\vSigma_a\right)^j\right)$ for $ j\in\{1,2,3\}$, but this is nearly impossible to justify in practice.

\section{Simulation}
 For an evaluation of the finite sample behavior of  the introduced method, we have conducted extensive simulations regarding
\begin{itemize}
\item[(i)] their ability in keeping the nominal significance level and
\item[(ii)]their power to detect certain alternatives in various scenarios.\\
\end{itemize}

Here we focus on the frameworks (\ref{asframe1}) and (\ref{asframe2}), which are the most unique ones, because they don't require the usual condition of increasing sample sizes, and therefore they are a strict expansion of the settings considered in \cite{sattler2018}.

\subsection{Type-I error}
To check the type-I error rate for $\alpha=5\%$ we consider  small($d=5$, $d=50$), moderate($d=200$) and large dimension($d=600$)  and  increasing the number of groups from $2$ to $12$. The sample sizes are fixed in a quite unbalanced setting given by $\vn=(n_1,...,n_{12})=(15,15,20,35,25,20,30,30,35,20,15,25)$. We used 10.000 simulation runs and chose $\upsilon=0.05$ for our subsampling type estimators. Thereby, the number of subsamling draws are between 251 and 81.158, one basis of the quite unbalanced setting. Higher values for $\upsilon$ would increase the accurancy but noticeable extend the computation time.

Two different nullhypotheses are investigated, to have a situation with $\beta_1\to 0$ as well as with $\beta_1\to 1$ .
These hypotheses are
\begin{itemize}
\item $\mathcal{H}_0^a: \left(\vP_a\otimes \vP_d\right)\vmu=\vnull$,
\item $\mathcal{H}_0^b: \left(\frac 1 a\vJ_a\otimes\frac 1 d \vJ_d\right)\vmu=\vnull$.\\

\end{itemize}
For both hypotheses the same distributional setting is choosen, with $\vSigma$ as a autoregressive covariance matrix with parameter $0.6$ e.g. $(\vSigma)_{i,j}=0.6^{|i-j|}$ and $\vmu_i=\vnull_d$ for $i=1,...,a$, to achieve better comparabilty. For $\mathcal H_0^b$ it holds $\tau_P\equiv 1$ while the values for $\mathcal{H}_0^a$ can be seen in Table~\ref{Tabelle1}

\begin{table}[ht]
\centering
\begin{tabular}{|l|r|r|r|r|r|r|r|r|r|r|r|r|}
  \hline
 $\tau_P$ & a=2 & a=3 & a=4 & a=5 & a=6 & a=7 & a=8 & a=9 & a=10 & a=11 & a=12 \\ 
  \hline
  d=5& .524 & .268 & .189 & .146 & .122 & .105 & .097 & .092 & .080 & .074 & .070 \\ 
  d=50  & .100 & .051 & .036 & .028 & .023 & .020 & .019 & .018 & .015 & .014 & .013 \\ 
  d=200 & .025 & .013 & .009 & .007 & .006 & .005 & .005 & .004 & .004 & .004 & .003 \\ 
  d=600  & .008 & .004 & .003 & .002 & .002 & .002 & .002 & .001 & .001 & .001 & .001 \\ 
   \hline
\end{tabular}\caption{ $\tau_P$ for $\vT=\frac 1 a\vJ_a\otimes \frac 1 d \vJ_d$ and $(\vSigma)_{ij}=0.6^{|j-i|}$ with different dimension and numbers of groups.}\label{Tabelle1}
\end{table}

%The results are presented in the following \cref{Bild1}.

\begin{figure}[H]
\setlength{\abovecaptionskip}{5pt} 
\setlength{\belowcaptionskip}{1pt} 
\begin{minipage}[t]{0.49\textwidth}\vspace{0pt} 
\includegraphics[width=\textwidth]{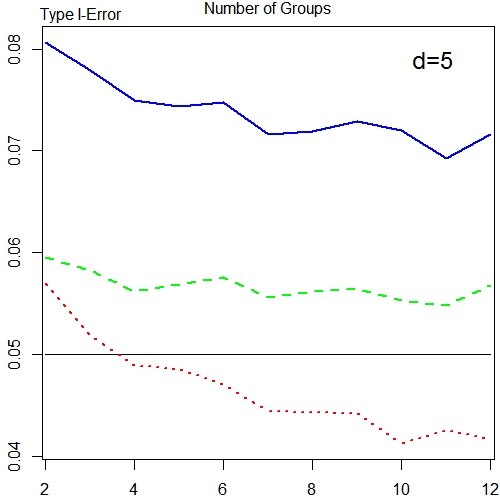} 
\end{minipage} \hspace{0.003cm}\begin{minipage}[t]{0.49\textwidth}\vspace{0pt} 
\includegraphics[width=\textwidth]{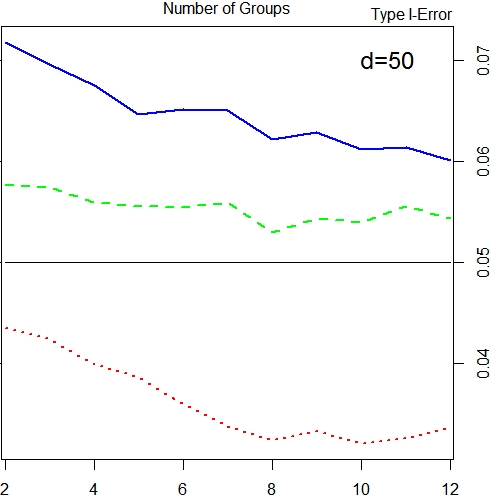} 
\vspace{-1.0cm}\end{minipage} \\
\begin{minipage}[t]{0.49\textwidth}\vspace{0pt} 
\includegraphics[width=\textwidth]{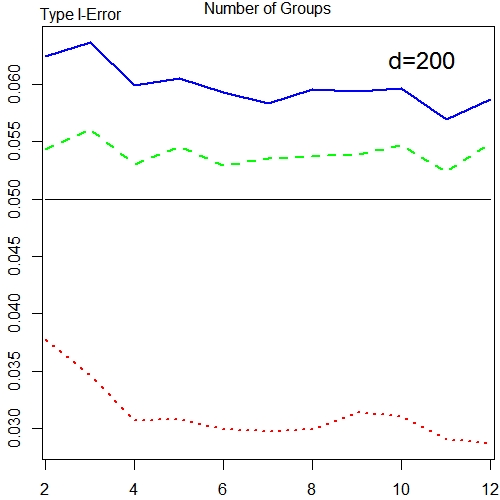} 
\end{minipage} 
\begin{minipage}[t]{0.49\textwidth}\vspace{0pt} 
\includegraphics[width=\textwidth]{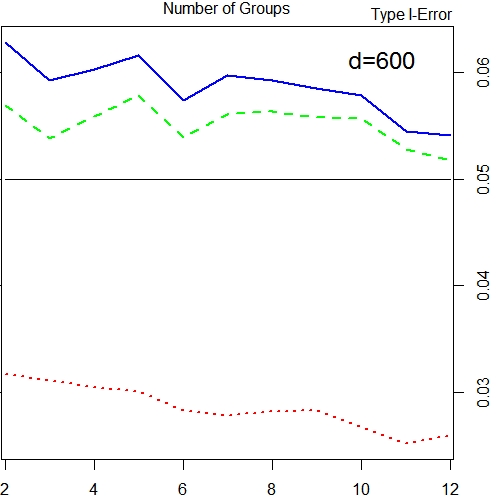} 
\end{minipage}% 
\caption{Simulated Type I-Error rates ($\alpha=5\%$) for $\psi_z$(\textcolor{blue}{---}), $\psi_\chi$ (\textcolor{red}{$\cdot \cdot \cdot$}) and
$\varphi_{N}^\star$(\textcolor{green}{- -}) under the null hypothesis $H_0^a: \left(\vP_a\otimes \vP_d\right)\vmu=\vnull$ for increasing dimension. } 
\label{BildSimulation1} 
\end{figure} 
 All tests $\psi_z=\ind(W_N>z_{1-\alpha})$, $\psi_{\chi}=\ind(W_N>\chi_{1;1-\alpha}^2)$ and $\varphi_{N}^\star=\ind\{W_N>K_{\hat f_P;1-\alpha}\}$ are used while $\chi_{1;1-\alpha}^2$ denotes the $1-\alpha$ quantile of a $\chi_1^2$ distribution and $K_{\hat f_P;1-\alpha}$ the $1-\alpha$ quantile of $K_{\hat f_P}$. It must be noted that in the following figures, we use different axes for each setting to make them as detailed as possible.

\begin{figure}[H]
\setlength{\abovecaptionskip}{5pt} 
\setlength{\belowcaptionskip}{1pt} 
\begin{minipage}[t]{0.49\textwidth}\vspace{0pt} 
\includegraphics[width=\textwidth]{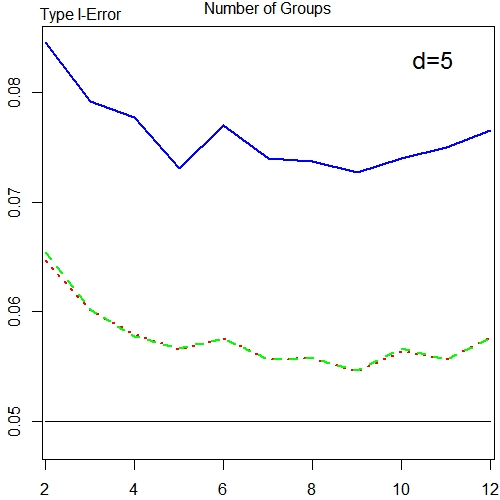} 
\end{minipage} \hspace{0.003cm}\begin{minipage}[t]{0.49\textwidth}\vspace{0pt} 
\includegraphics[width=\textwidth]{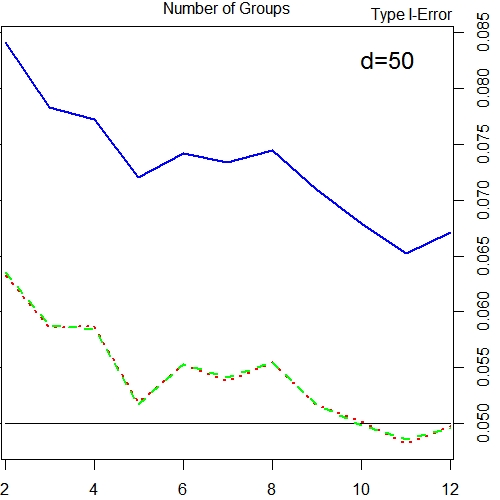} 
\vspace{-1.0cm}\end{minipage} \\
\begin{minipage}[t]{0.49\textwidth}\vspace{0pt} 
\includegraphics[width=\textwidth]{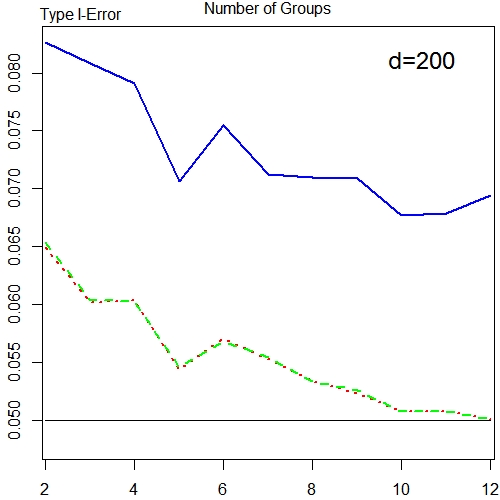} 
\end{minipage} 
\begin{minipage}[t]{0.49\textwidth}\vspace{0pt} 
\includegraphics[width=\textwidth]{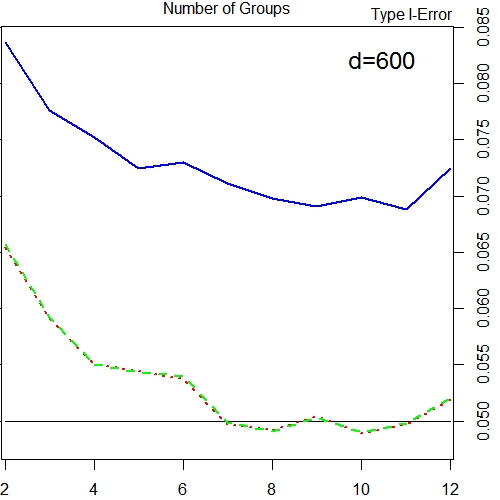} 
\end{minipage}% 
\caption{Simulated Type I-Error rates ($\alpha=5\%$) for $\psi_z$(\textcolor{blue}{---}), $\psi_\chi$ (\textcolor{red}{$\cdot \cdot \cdot$}) and
$\varphi_{N}^\star$(\textcolor{green}{- -}) under the null hypothesis $H_0^b: \left(\frac{1}{a}\vJ_a\otimes \frac{1}{d}\vJ_d\right)\vmu=\vnull$ for increasing dimension. } 
\label{BildSimulation2} 
\end{figure}

In Figure~\ref{BildSimulation1} it can be seen that for $\beta_1\to 0$, the usage of $\psi_\chi$ results in too conservative test decisions, especially for larger dimension. So, in this case, a rate which is in most cases lower than 0.04 would lead to a raised number of rejections when the null-hypothesis is true. However, $\psi_z$ has too high type-I error rates, especially in the case of small d=5. But, this improves for a higher dimension as well as a larger number of groups.
For all dimensions $\varphi_N^\star$ shows by far the best type I error control rates and performs well with comparatively low dimensions or just a few groups.
It can be seen that the error rates have less fluctuation for higher numbers of groups. The reason for this is that for fixed comparatively small sample sizes an increasing number of groups not only improves the approximation but also is necessary to get reliable estimators.\\\\

In contrast, there is nearly no difference between $\psi_\chi$ and $\varphi_N^\star$ in Figure~\ref{BildSimulation2} . This is not surprising, because from Figure~\ref{Tabelle1} we know that $f_P$ has always the value one. Furthermore, the small difference between both curves shows once more the good performance of the used estimators. Apart from that again the performance of $\varphi_N^\star$ is quite good, in particular for a higher number of groups. Using the test $\phi_z$ that is based on the wrong limit distribution under $\mathcal{H}_0^b$ results in considerably larger type-I error rates between $0.065$ and $0.085$.\\\\

To sum up $\varphi_N^\star$ shows really good type-I error rates, overall settings, dimensions, and group numbers, even for substantially unbalanced sample sizes, which moreover contains groups with just a few observations.

\subsubsection{Power}
The property to detect deviations from the nullhypothesis is investigated    by considering the same distributional setting as for the type I-error rate, with the same  hypotheses. For this analysis we choose $d=50$ and  small($a=2$), moderate($a=4$) and large($a=8$, $a=10$) number of factor levels.\\\\

We are interested in three kinds of alternatives: 
\begin{itemize}

\item a  \emph{trend-alternative} with $\vmu_1=\vmu_3=....,\vmu_9=\vnull_d$ and $(\vmu_2)_k=(\vmu_4)_k,...,(\vmu_{10})_k=\delta \Cdot k/d$, $k=1,...,d$,\quad $\delta\in [0,2]$,
\item a  \emph{one-point-alternative} with $\vmu_1=\vmu_3=....,\vmu_9=\vnull_d$ and $\vmu_2=\vmu_4,...,\vmu_{10}=\delta \Cdot \ve_1$,\quad $\delta\in [0,3.5]$ and
\item a \emph{shift-alternative} with $\vmu_1=\vmu_3=....,\vmu_9=\vnull$ and $(\vmu_2)=(\vmu_4),...,(\vmu_{10})=\delta\cdot \veins_d$ for $\mathcal{H}_0^b,$\quad $\delta\in [0,2]$
\end{itemize}
Here $\ve_\ell$ denotes the vector containing $1$ in the $\ell-th$ component, and $0$ elsewhere and $\veins_d$ contains just 1's in each component.

\captionsetup[subfloat]{labelformat=empty}
\captionsetup[subfloat]{justification=RaggedRight}
\captionsetup[subfloat]{nearskip=0.3cm}
\captionsetup[subfloat]{farskip=0.2cm}
\captionsetup[subfloat]{margin=15pt}
%\captionsetup[subfloat]{ labelsep=quad}
%\captionsetup[subfloat]{parskip=10.2cm}
%\captionsetup[subfloat]{abovecaptionskip=10.1cm}
%\captionsetup[subfloat]{belowskip=10.1cm}

\begin{figure}[H]
\begin{minipage}[t]{0.99\textwidth}\vspace{0pt} 
\includegraphics[width=\textwidth]{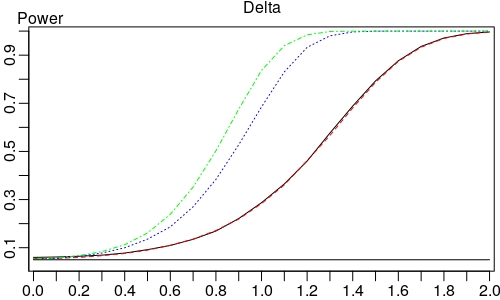} 
\end{minipage} \\
\begin{minipage}[t]{0.99\textwidth}\vspace{0pt} 
\includegraphics[width=\textwidth]{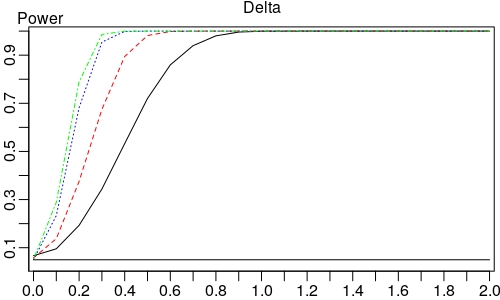} 
\end{minipage} % 
\caption{ 
Simulated power curves of $\varphi_N^\star$ for a trend alternative with $d=50$, 10000 simulation runs 
and an autoregressive structure($\left(\vSigma\right)_{i,j}=0.6^{|i-j|}$). The sample size is   $\vn=(15,15,20,35,25,20,30,30,35,20)$ and different numbers of groups were considered, namely $a=2$(---), $a=4$(\textcolor{red}{- -}), $a=8$(\textcolor{blue}{$\cdot \cdot \cdot$}) and $a=10$(\textcolor{green}{$\cdot - \Cdot - $}).  } 
\label{BildPowerT} 
\end{figure} 
% Here this exceptional setting with half of the expectation vectors has the alternative form, is chosen, to get more comparabilty between the simulations with different number of factor levels.
From the simulation result given in \cite{sattler2018}, it directly follows that it is challenging to detect the one-point alternative for $d=50$ depending on the hypothesis. For this reason, we here consider a much larger value for $\delta$.

\begin{figure}[htp]
\begin{minipage}[t]{0.49\textwidth}\vspace{0pt} 
\includegraphics[width=\textwidth]{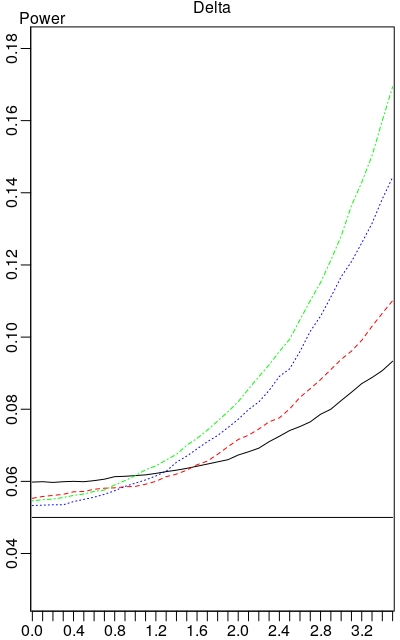} 
\end{minipage} 
\begin{minipage}[t]{0.49\textwidth}\vspace{0pt} 
\includegraphics[width=\textwidth]{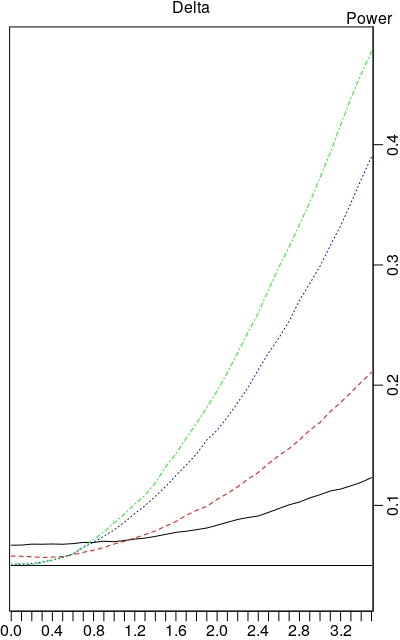} 
\end{minipage} 
\caption{ 
Simulated power curves of $\varphi_N^\star$ for a one-point alternative with $d=50$, 10000 simulation runs 
and an autoregressive structure($\left(\vSigma\right)_{i,j}=0.6^{|i-j|}$). The sample size is   $\vn=(15,15,20,35,25,20,30,30,35,20)$and different numbers of groups were considered, namely $a=2$(---), $a=4$(\textcolor{red}{- -}), $a=8$(\textcolor{blue}{$\cdot \cdot \cdot$}) and $a=10$(\textcolor{green}{$\cdot - \Cdot - $}).  } 
\label{BildPowerTOP} 
\end{figure}

\begin{figure}[h]
\begin{minipage}[t]{0.99\textwidth}\vspace{0pt} 
\includegraphics[width=\textwidth]{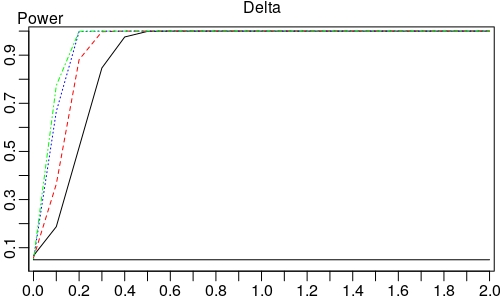} 
\end{minipage} 
 
\caption{ 
Simulated power curves of $\varphi_N^\star$ for a shift alternative with $d=50$, 10000 simulation runs 
and an autoregressive structure( $\left(\vSigma\right)_{i,j}=0.6^{|i-j|}$ ). The sample size is   $\vn=(15,15,20,35,25,20,30,30,35,20)$and different numbers of groups were considered, namely $a=2$(---), $a=4$(\textcolor{red}{- -}), $a=8$(\textcolor{blue}{$\cdot \cdot \cdot$}) and $a=10$(\textcolor{green}{$\cdot - \Cdot - $}).  } 
\label{BildPowerS} 
\end{figure}

%
%\begin{figure}[H]
%    \subfloat[ Power curves for a trend alternative and ${H_0^a: \left(\vP_2\otimes \frac{1}{d}\vJ_d\right)\vmu=\vnull}$.]
%    {\includegraphics[width=0.5\textwidth]{OPAl}} 
%    \subfloat[ Power curves for a trend alternative and ${H_0^b: \left(\frac 1 2\vJ_2\otimes \vP_d\right)\vmu=\vnull}$.]{\includegraphics[width=0.5\textwidth]{OPBr}} 
%\caption{ 
%Simulated power curves for the Statistic $W_{N}\cdot \sqrt{(N-1)/N}$ in $10^4$ 
%simulation runs for different dimensions with $n_1=20, n_2=30$ and an autoregressive structure( $\left(\vSigma_1\right)_{i,j}=0.6^{|i-j|}$ and  $\left(\vSigma_2\right)_{i,j}=0.65^{|i-j|}$).} 
%\label{BildPowerTAB} 
%\end{figure}
%\begin{figure}[H]
%    \subfloat[ Power curves for a trend alternative and ${H_0^a: \left(\vP_2\otimes \frac{1}{d}\vJ_d\right)\vmu=\vnull}$.]
%    {\includegraphics[width=0.5\textwidth]{SBl}} 
% 
%\caption{ 
%Simulated power curves for the Statistic $W_{N}\cdot \sqrt{(N-1)/N}$ in $10^4$ 
%simulation runs for different dimensions with $n_1=20, n_2=30$ and an autoregressive structure( $\left(\vSigma_1\right)_{i,j}=0.6^{|i-j|}$ and  $\left(\vSigma_2\right)_{i,j}=0.65^{|i-j|}$).} 
%\label{BildPowerTAB} 
%\end{figure}
For the trend alternative(Figure~\ref{BildPowerT})  $\varphi_N^\star$ has a high power for both nullhypotheses were the power is essential higher for $\mathcal{H}_0^b$. Increasing the number of groups also increases the power in both hypothesis.  It is noticeable that for $\mathcal H_0^a$ increasing the number from $8$ to $10$ groups has substantial more effect  than from $2$ to $4$ groups while for $\mathcal H_0^b$ it's vice versa.\\\\

As expected detecting the one-point alternative(Figure~\ref{BildPowerTOP}) is challenging for both hypotheses, so the power is low in both cases, even for larger $\delta$- values in particular for $\mathcal{H}_0^a$. This coincides with the power calculations from \cite{sattler2018}. But it can be seen that an increasing number of groups increase the power essentially.

Finally, we considered a shift alternative(Figure~\ref{BildPowerS}), but just for $\mathcal{H}_0^b$. As in other cases(\cite{pEB},\cite{sattler2018}), this alternative is comparatively easy to detect. This holds in particular for an increasing number groups.\\

All in all, except for the one-point alternative, $\varphi_N^\star$ has very high power even for these small sample sizes, especially $n_1=n_2=15$. Moreover $H_0^b$ is much easier to detect, in all settings.

\section{Conclusion}
 The present paper investigated a procedure for homoscedastic split-plot designs under various settings containing different kinds of potential high-dimensionality. Under equal covariance matrices or similar conditions (as mentioned in Section~\ref{mod}) results for settings with, for example, a large number of small independent groups are found. These kinds of data sets nowadays get more important because there is a trend to divide data sets more, e.g. in the context of personalized medicine. Different to existing approaches we take this development into account by considering a variety of different frameworks.\\
We were able to expand the central theorem of \cite{sattler2018} to also cover this case, for the price of the additional assumption of equal covariance matrices. Moreover, we generalized it to some more cases that kind of completes the theorem. For all settings, we approximate the critical value of the test statistic, by a standardized $\chi_f^2$ distribution with appropriate $f$. To use these results we developed estimators that can be used unattached of the asymptotic framework.\\
We conducted simulations to investigate the level of the resulting test as well as its power. The outcomes were convincing, especially for a larger number of groups.\\\\

Unfortunately, it is not that easy to verify the assumption of equal covariance matrices or just equal powers of traces. The most popular test under normality, Box's M-test \cite{box1953}, has quite good results but doesn't take care of our asymptotic frameworks. High-dimensional tests of equal covariance matrices are a field of great interest, which was for example investigated in \cite{li2012} and \cite{li2014} and we plan to combine their techniques with the results obtained in \cite{sattler2019} in the near future.
\section*{Acknowledgement}
The author would like to thank Markus Pauly for helpful discussions and many valuable suggestions. This work was supported by the German Research Foundation project DFG-PA2409/4-1.

\newpage
\section{Appendix}\label{Appendix}
\begin{proof}[Proof of Theorem~\ref{Asymptotik}] For this proof, it is helpful to present the theorem in a more detailed way.\\\\

{Let $\beta_s={\lambda_s}\Big/{\sqrt{\sum_{\ell=1}^{ad}\lambda_\ell^2}}$ for $s=1,\dots,ad$. }
Then $\widetilde W_N$ has, under $H_0(\vT)$, and one of the 
frameworks \ref{asframe1}-\ref{asframe5} asymptotically
\begin{itemize} 
\item[a)]a standard normal distribution  if and only if
\[\beta_1 = \max_{s\leq ad} \beta_{s} \to 0 \hspace{0.5cm} \text{as}\hspace{0.2cm} N \to \infty,\]
\item[b)]a standardized $\left(\chi_1^2-1\right)/\sqrt{2}$ distribution if and only if
\[\beta_1\to 1 \hspace{0.5cm} \text{as}\hspace{0.2cm} N \to \infty,\]

\item[c)]a distribution of the shape  $\sum_{s=1}^rb_s \left(C_s-1\right)/\sqrt 2+\sqrt{1-\sum_{s=1}^r b_s^2}\cdot Z$,  if and only if
\[\text{for all } s\in \N \hspace{0.5cm}\beta_s\to b_s \hspace{0.5cm} \text{as }\hspace{0.2cm} N \to \infty,\]
for a decreasing sequence $(b_s)_s$ in $[0,1)$ with  $r\in \N\setminus \{1\}$ with $b_{r}>0$ and $b_{r+1}=0$ with $C_i\stackrel{i.i.d.}{\sim} \chi_1^2$, $Z\sim \mathcal{N}(0,1)$.
\item[d)]a distribution of the shape  $\sum_{s=1}^\infty b_s \left(C_s-1\right)/\sqrt 2$,  if 
\[\text{for all } s\in \N \hspace{0.5cm}\beta_s\to b_s \hspace{0.5cm} \text{as }\hspace{0.2cm} N \to \infty,\]
for a decreasing sequence $(b_s)_s$ in $(0,1)$ with $\sum_{s=1}^\infty b_s^2=1$ and $C_i\stackrel{i.i.d.}{\sim} \chi_1^2$.% and $b_s>0$ for all s.\\\\

\end{itemize}
The first two parts as well as the last one were proved in \cite{sattler2018}.\\
For part c)  from Cramers theorem it is well known that  it needs an infinite number of summands to get a normal distribution as limit distribution. So it exists a infinite amount $M\subset \N$ with
\[\sum\limits_{\ell\in M} \beta_\ell \left(\frac{C_\ell-1}{\sqrt{2}}\right)\stackrel{\mathcal{D}}{\to}\sqrt{1-\sum_{s=1}^r b_s^2}\cdot Z.\]
The proof of part a) shows, that  $\beta_\ell\to 0$ for all $\ell\in M$, and because of the decreasing order there exists an $r'\in \N$ with $\beta_{r'}>0$ and $\beta_{r'+1}=0$.
Assume now that $\beta_\ell\to b'_\ell$ for $\ell=1,...,r'$ otherwise consider the subsequence where this holds.
It remains to show that from
\[\sum\limits_{\ell=1}^{r'} \beta_\ell \left(\frac{C_\ell-1}{\sqrt{2}}\right)\to \sum\limits_{\ell=1}^{r'} b_\ell' \left(\frac{C_\ell-1}{\sqrt{2}}\right) \stackrel{\mathcal{D}}{=}\sum\limits_{\ell=1}^{r} b_\ell \left(\frac{C_\ell-1}{\sqrt{2}}\right),\]
it follows $r=r'$ as well as $b_\ell=b_\ell'$.
To this aim, we consider the Moment-generating functions so we know, for all $t\in \R$

\[\begin{array}{l}\prod\limits_{\ell=1}^{r'} \left(1-  \frac{2 b_\ell' t} {\sqrt 2} \right)^{-1/2}\exp\left(-  t \frac{b_\ell'} {\sqrt 2}\right)=\prod\limits_{\ell=1}^{r} \left(1-  \frac{2 b_\ell t} {\sqrt 2} \right)^{-1/2}\exp\left(-  t \frac{b_\ell} {\sqrt 2}\right). \end{array}\]\\\\
Thus, applying the continous mapping theorem we have for all $ t \in \R$ \\\\
$\begin{array}{rl}\left(\prod_{\ell=1}^{r'} \left(1-  \frac{2 b_\ell' t} {\sqrt 2} \right)^{-1/2}\exp\left(-  \frac{b_\ell' t} {\sqrt 2}\right)\right)^{-2}&=\left(\prod_{\ell=1}^{r} \left(1-  \frac{2 b_\ell t} {\sqrt 2} \right)^{-1/2}\exp\left(-  \frac{b_\ell t} {\sqrt 2}\right)\right)^{-2} \\[1.5ex]\Leftrightarrow\prod_{\ell=1}^{r'} \left(1-   {\sqrt{2}b_\ell' t} \right) \exp(-\sqrt{2} b_\ell't)&=\prod_{\ell=1}^{r} \left(1-   {\sqrt{2}b_\ell t} \right)\cdot \exp(-\sqrt{2} b_\ell t).\end{array}$\\\\\\
Now we consider the zero points of both sides which are a consequence of the polynomial parts and can be written by $\frac{1}{\sqrt{2}b_\ell}$ resp. $\frac{1}{\sqrt{2}b_\ell'}$. It can be directly inferred from this that both polyiomials has the same degree and therefore $r'=r$. Moreover both of them have the same zero points with the same multiplicity. So the coefficients are the same on both sides and because of the decreasing order  it follows $b_\ell=b_\ell'$ for $\ell=1,...,r$.
Therefore the result follows.
\end{proof}

Given the fact that framework \ref{asframe3} is not really high-dimensional, and \ref{asframe1} just partwise, it would be possible to use other more classical estimators for the unknown traces. Nevertheless our focus was to develop preferably general estimators which can be used in a variety of settings.
\begin{Le}
With 
\[A_{i,1}=\frac 1 2\sum\limits_{\ell_1\neq \ell_2=1}^{n_i} (\vX_{i,\ell_1}-\vX_{i,\ell_2})^\top\vT_S (\vX_{i,\ell_1}-\vX_{i,\ell_2})\]
we can define
\[A_1=\frac 1 { \sum_{i=1}^a (n_i-1)n_i}\sum\limits_{i=1}^a A_{i,1},\]
which is an unbiased and ratio consistent estimator for $\tr(\vT_S\vSigma)$, in all of our frameworks.\end{Le}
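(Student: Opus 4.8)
The plan is to reduce everything to one Gaussian quadratic form per group and then to control its variance relative to $\tr(\vT_S\vSigma)^2$ via Chebyshev's inequality. A preliminary observation is that $A_{i,1}$ depends on the observations of group $i$ only through the pairwise differences $\vX_{i,\ell_1}-\vX_{i,\ell_2}$, so all mean vectors cancel and we may replace $\vX_{i,\ell}$ by the centred vectors $\vY_{i,\ell}:=\vX_{i,\ell}-\vmu_i\sim\mathcal{N}_d(\vnull,\vSigma)$; in particular nothing in the argument uses $H_0(\vT)$, so the statement holds under the null and the alternative alike. First I would record the identity obtained by stacking $\vU_i:=(\vY_{i,1}^\top,\dots,\vY_{i,n_i}^\top)^\top\sim\mathcal{N}_{n_id}(\vnull,\vI_{n_i}\otimes\vSigma)$ and expanding the square:
\[
A_{i,1}=\vU_i^\top\bigl(n_i\,\vP_{n_i}\otimes\vT_S\bigr)\vU_i,\qquad \vP_{n_i}:=\vI_{n_i}-\tfrac{1}{n_i}\vJ_{n_i},
\]
which exhibits $A_{i,1}$ as a quadratic form in a centred Gaussian vector with a symmetric matrix.

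Unbiasedness is then immediate: since $\vP_{n_i}$ is a projection of rank $n_i-1$, one gets $\E(A_{i,1})=\tr(n_i\,\vP_{n_i}\otimes\vT_S\vSigma)=n_i(n_i-1)\tr(\vT_S\vSigma)$, and summing over $i$ and dividing by $\sum_i n_i(n_i-1)$ yields $\E(A_1)=\tr(\vT_S\vSigma)$. For the variance I would invoke the standard formula $\Var(\vZ^\top\vB\vZ)=2\tr\bigl((\vB\vSigma_Z)^2\bigr)$ for a centred Gaussian vector $\vZ$ with covariance $\vSigma_Z$ and symmetric $\vB$, which gives
\[
\Var(A_{i,1})=2\tr\Bigl(\bigl(n_i\,\vP_{n_i}\otimes\vT_S\vSigma\bigr)^2\Bigr)=2\,n_i^2(n_i-1)\,\tr\bigl((\vT_S\vSigma)^2\bigr),
\]
and then, using independence of the $a$ groups,
\[
\Var(A_1)=\frac{2\,\tr\bigl((\vT_S\vSigma)^2\bigr)\,\sum_{i=1}^{a}n_i^2(n_i-1)}{\bigl(\sum_{i=1}^{a}n_i(n_i-1)\bigr)^{2}}.
\]
Ratio consistency will follow from Chebyshev's inequality, since $\E(A_1)=\tr(\vT_S\vSigma)$, once I show $\Var(A_1)/\tr(\vT_S\vSigma)^2\to0$ in each framework.

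To check that, I would split the ratio into two factors. First, since $\vT_S$ is a projection, $\vT_S\vSigma$ is similar to the positive semidefinite matrix $\vSigma^{1/2}\vT_S\vSigma^{1/2}$, hence has nonnegative eigenvalues $\theta_1\ge\cdots\ge\theta_d\ge0$, so $\tr\bigl((\vT_S\vSigma)^2\bigr)/\tr(\vT_S\vSigma)^2=\sum_j\theta_j^2/(\sum_j\theta_j)^2\le1$ — a bound valid for every $d$, so the possibly diverging dimension never enters. Second, using $n_i^2(n_i-1)\le n_{\max}\,n_i(n_i-1)$ together with the elementary bounds $\sum_i n_i(n_i-1)\ge n_{\max}(n_{\max}-1)$ and $\sum_i n_i(n_i-1)\ge 2a$ (recall $n_i\ge2$),
\[
\frac{\sum_i n_i^2(n_i-1)}{\bigl(\sum_i n_i(n_i-1)\bigr)^{2}}\ \le\ \frac{n_{\max}}{\sum_i n_i(n_i-1)}\ \le\ \min\!\Bigl\{\tfrac{1}{n_{\max}-1},\ \tfrac{n_{\max}}{2a}\Bigr\},
\]
and the right-hand side tends to $0$ whenever $a\to\infty$ or $n_{\max}\to\infty$, which is precisely the situation in each of \eqref{asframe1}--\eqref{asframe5}. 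Multiplying the two factors completes the proof.

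Everything here is routine once the quadratic-form representation is in hand; the one place that deserves care is the final display, i.e.\ confirming that the purely combinatorial factor vanishes \emph{simultaneously} in all five regimes without ever assuming a relation between $a$, $d$ and the $n_i$ — the entire $d$-dependence being absorbed into the universal inequality $\tr\bigl((\vT_S\vSigma)^2\bigr)\le\tr(\vT_S\vSigma)^2$.
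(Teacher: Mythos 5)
Your proposal is correct, and it follows the same overall skeleton as the paper — unbiasedness, a bound on $\Var(A_1)/\tr^2(\vT_S\vSigma)$, a two-case analysis ($n_{\max}\to\infty$ versus $n_{\max}$ bounded and $a\to\infty$), and Chebyshev — but the central computation is done by a genuinely different and more self-contained route. The paper treats $A_{i,1}$ as a U-statistic and imports an $\0$-type variance bound from Sattler and Pauly (2018), expressed through binomial-coefficient counting, $\Var(A_1)\leq \big[\sum_i\binom{n_i}{2}\big]^{-2}\sum_i\binom{n_i}{2}\big(\binom{n_i}{2}-\binom{n_i-2}{2}\big)\cdot\0(\tr^2(\vT_S\vSigma))$; you instead exhibit $A_{i,1}$ exactly as the Gaussian quadratic form $\vU_i^\top(n_i\,\vP_{n_i}\otimes\vT_S)\vU_i$, which yields the closed form $\Var(A_{i,1})=2n_i^2(n_i-1)\tr((\vT_S\vSigma)^2)$ with exact constants, and you then pass to $\tr(\vT_S\vSigma)^2$ via the eigenvalue inequality $\tr((\vT_S\vSigma)^2)\le\tr^2(\vT_S\vSigma)$ (valid because $\vT_S$ is a projection, so $\vT_S\vSigma$ has nonnegative spectrum). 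What your version buys is precision and independence from the cited reference — the reader sees exactly where the factors $n_{\max}^{-1}$ and $a^{-1}$ come from via the clean bound $\sum_i n_i^2(n_i-1)/(\sum_i n_i(n_i-1))^2\le\min\{1/(n_{\max}-1),\,n_{\max}/(2a)\}$ — while the paper's combinatorial bound generalizes more directly to the higher-order U-statistics $A_2$ and $C_1$ treated later, where no such exact quadratic-form representation is available. One small caveat: your bound $\sum_i n_i(n_i-1)\ge 2a$ uses $n_i\ge 2$ for every group; this is implicit in the paper as well (its own proof divides by $\binom{n_{\min}}{2}$), but it is worth stating explicitly since groups with $n_i=1$ contribute nothing to $A_1$ and would otherwise break the $a\to\infty$ case.
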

\begin{proof}

It is obvious that this is a unbiased estimator of $\tr(\vT_S\vSigma)$. With well known rules and analogous to \cite{sattler2018} we calculate
\[\Var(A_1)\leq\frac 1 {\left[\sum_{i=1}^a \binom {n_i}{2}\right]^2} \sum\limits_{i=1}^a  \binom {n_{i}} {2} \left(\binom {n_{i}} {2} -\binom {n_{i}-2} {2}\right)\cdot\0(\tr^2(\vT_S\vSigma)).\]\\\\
Now we need a case analysis which is done for some of the following proofs. So the first one is in detail and the other proofs are shorter. 
At first we consider the case where $ n_{\max}\to \infty$. Then
\[\begin{array}{ll}\Var(A_1)&\leq \frac 1 {\left[\sum_{i=1}^a \binom {n_i}{2}\right]\cdot \binom{n_{\max}}{2}} \sum\limits_{i=1}^a  \binom {n_{i}} {2} \left(\binom {n_{i}} {2} -\binom {n_{i}-2} {2}\right)\cdot\0(\tr^2(\vT_S\vSigma))
\\[1.5ex]
&\leq\frac 1  {\left[\sum_{i=1}^a \binom {n_i}{2}\right]\cdot \binom{n_{\max}}{2}} \sum\limits_{i=1}^a  \binom {n_{i}} {2} \left(\binom {n_{\max}} {2} -\binom {n_{\max}-2} {2}\right)\cdot\0(\tr^2(\vT_S\vSigma))\\[1.5ex]
&= \frac{\left(\binom {n_{\max}} {2} -\binom {n_{\max}-2} {2}\right)}{\binom{n_{\max}}{2}} \cdot\0(\tr^2(\vT_S\vSigma))\\
&=\0\left( {n_{\max}^{-1}}\right)\cdot \0(\tr^2(\vT_S\vSigma)).
\end{array}
\]
For the other case $n_{\max}$ is bound and $a\to \infty$. In this situation it holds
\[\begin{array}{ll}\Var(A_1)&\leq \frac 1 {\left[\sum_{i=1}^a \binom {n_i}{2}\right]\cdot a \cdot \binom{n_{\min}}{2}} \sum\limits_{i=1}^a  \binom {n_{i}} {2} \left(\binom {n_{\max}} {2} -\binom {n_{\max}-2} {2}\right)\cdot\0(\tr^2(\vT_S\vSigma))
\\
 &=\frac  {\left(\binom {n_{\max}} {2} -\binom {n_{\max-2}} {2}\right)} {a \cdot \binom{n_{\min}}{2}}\cdot\0(\tr^2(\vT_S\vSigma))
\\ [1.5ex]
&=\0\left( {a}^{-1}\right)\cdot \0(\tr^2(\vT_S\vSigma))
\end{array}\]

So dividing by $\tr^2(\vT_S\vSigma)$ and then use the Tschebyscheff inequality leads to the results in both cases.
\end{proof}
For the estimated version of the standardized quadratic form, one more estimator is needed.
%We need one more estimator for the estimated version of the standardized quadratic form.
\begin{Le}
The estimator given by 
\[A_2=\sum\limits_{i=1}^a \sum\limits_{\begin{footnotesize}\substack{\ell_1,\ell_2=1\\ \ell_1>\ell_2}\end{footnotesize}}^{n_i}
\sum\limits_{\begin{footnotesize}\substack{k_2=1\\k_2\neq \ell_1\neq \ell_2 }\end{footnotesize}}^{n_i}\sum\limits_{\begin{footnotesize}\substack{k_1=1\\ \ell_2\neq \ell_1\neq k_1>k_2}\end{footnotesize}}^{n_i}
\frac{\left[\left({\vX}_{i,\ell_1}-{\vX}_{i,\ell_2}\right)^\top \vT_S\left({\vX}_{i,k_1}-{\vX}_{i,k_2}\right)\right]^2} {4\cdot 6 \sum_{i=1}^a \binom{n_i} {4}},\]
 is a unbiased and ratio-consistent estimator of $\tr\left(\left(\vT_S\vSigma\right)^2\right)$ in all our asymptotic frameworks.

\end{Le}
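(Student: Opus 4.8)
The plan is to establish the two claims separately, following the same template as the preceding lemma on $A_1$.

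\textbf{Unbiasedness.} First I would observe that every summand depends on the data only through within-group difference vectors $\vX_{i,\ell_1}-\vX_{i,\ell_2}$ with $\ell_1\neq\ell_2$, whose distribution does not involve $\vmu_i$; hence the computation is the same under $H_0$ and under any alternative. The summation ranges force the four indices $\ell_1,\ell_2,k_1,k_2$ to be pairwise distinct, so $\vU:=\vX_{i,\ell_1}-\vX_{i,\ell_2}$ and $\vW:=\vX_{i,k_1}-\vX_{i,k_2}$ are independent with $\E\vU=\E\vW=\vnull$ and $\E[\vU\vU^\top]=\E[\vW\vW^\top]=2\vSigma$, whence
\[
\E\!\left[\big((\vX_{i,\ell_1}-\vX_{i,\ell_2})^\top\vT_S(\vX_{i,k_1}-\vX_{i,k_2})\big)^2\right]
=\tr\!\big(\vT_S\,\E[\vW\vW^\top]\,\vT_S\,\E[\vU\vU^\top]\big)=4\,\tr\!\big((\vT_S\vSigma)^2\big).
\]
Then I would count the summation ranges: they amount to choosing a $4$-element subset of $\{1,\dots,n_i\}$ and an ordered split of it into the pair $\{\ell_1,\ell_2\}$ and the pair $\{k_1,k_2\}$ (the inner orderings being fixed by $\ell_1>\ell_2$, $k_1>k_2$), i.e. $\binom{4}{2}\binom{n_i}{4}=6\binom{n_i}{4}$ terms in group $i$. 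Since each term has expectation $4\,\tr((\vT_S\vSigma)^2)$, the normalising constant $4\cdot 6\sum_i\binom{n_i}{4}$ is exactly the product of the two factors $4$ and $6\sum_i\binom{n_i}{4}$, so $\E(A_2)=\tr((\vT_S\vSigma)^2)$.

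\textbf{Ratio consistency.} By Chebyshev it suffices to show $\Var(A_2)/\tr^2((\vT_S\vSigma)^2)\to 0$. Because the $a$ groups are independent, $\Var(A_2)=\big(4\cdot 6\sum_j\binom{n_j}{4}\big)^{-2}\sum_{i=1}^a\Var(S_i)$, where $S_i$ denotes the inner (unnormalised) sum over group $i$. For $\Var(S_i)$ I would use a Hoeffding-type decomposition: the covariance of two summands vanishes whenever their index $4$-sets are disjoint, since the eight vectors involved are then independent; when the two $4$-sets share $j\in\{1,2,3,4\}$ indices, a Wick (Gaussian-moment) expansion of the product of the two squared bilinear forms is a sum of products of traces $\tr((\vT_S\vSigma)^{m_1})\tr((\vT_S\vSigma)^{m_2})\cdots$ with $\sum_l m_l=4$, and since $\vT_S\vSigma$ is similar to the positive semidefinite matrix $\vT_S\vSigma\vT_S$ its eigenvalues are nonnegative, so each such product is $\le\tr^2((\vT_S\vSigma)^2)$; hence every shared-index covariance is $\0(\tr^2((\vT_S\vSigma)^2))$. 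As there are $\0(n_i^{8-j})$ pairs of $4$-sets sharing exactly $j$ indices, the $j=1$ case dominates and $\Var(S_i)=\0(n_i^{7}\,\tr^2((\vT_S\vSigma)^2))$. Using $\binom{n}{4}\ge c\,n^{4}$ for group sizes bounded below, this gives
\[
\frac{\Var(A_2)}{\tr^2((\vT_S\vSigma)^2)}=\0\!\left(\frac{\sum_{i=1}^a n_i^{7}}{\big(\sum_{j=1}^a n_j^{4}\big)^{2}}\right).
\]

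\textbf{Case analysis over the frameworks, and the main obstacle.} As in the proof of the $A_1$-lemma I would then split into two cases. If $n_{\max}\to\infty$ (which covers \eqref{asframe3}, \eqref{asframe4}, \eqref{asframe5}), then $\sum_i n_i^{7}\le n_{\max}^{3}\sum_i n_i^{4}$ and $\sum_j n_j^{4}\ge n_{\max}^{4}$, so the ratio is $\0(n_{\max}^{-1})\to 0$. If instead $n_{\max}$ is bounded and $a\to\infty$ (frameworks \eqref{asframe1}, \eqref{asframe2}), then $\sum_i n_i^{7}=\0(a)$ while $\big(\sum_j n_j^{4}\big)^{2}$ is of order $a^{2}$, so the ratio is $\0(a^{-1})\to 0$. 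In either case Chebyshev yields $A_2/\tr^2((\vT_S\vSigma)^2)\stackrel{\mathcal P}{\to}1$. I expect the main obstacle to be the second step: carefully organising the Wick expansion of $\Var(S_i)$ and verifying that every shared-index covariance is genuinely $\0(\tr^2((\vT_S\vSigma)^2))$ rather than larger; once that bookkeeping is in place, the combinatorial counting and the per-framework case distinction are routine and mirror the $A_1$-lemma exactly.
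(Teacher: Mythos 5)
Your proposal follows the same route as the paper: unbiasedness by direct computation (which the paper merely asserts as clear), then a variance bound obtained by exploiting independence across groups, observing that only pairs of summands with overlapping index quadruples contribute, bounding each such covariance by $\0\bigl(\tr^2((\vT_S\vSigma)^2)\bigr)$, and closing with the same two-case analysis ($n_{\max}\to\infty$ versus bounded $n_{\max}$ with $a\to\infty$) followed by Chebyshev. Your $\0(n_i^7)$ count of overlapping pairs is exactly the paper's $\binom{n_i}{4}\bigl(\binom{n_i}{4}-\binom{n_i-4}{4}\bigr)$, and the resulting rates $\0(n_{\max}^{-1})$ and $\0(a^{-1})$ agree.

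One step in your sketch is stated too strongly. It is not true that every product $\prod_l\tr((\vT_S\vSigma)^{m_l})$ with $\sum_l m_l=4$ and nonnegative eigenvalues is bounded by $\tr^2((\vT_S\vSigma)^2)$: for instance $\tr((\vT_S\vSigma)^3)\,\tr(\vT_S\vSigma)\ge\tr^2((\vT_S\vSigma)^2)$ by Cauchy--Schwarz applied to $\sum_j\lambda_j^{3/2}\lambda_j^{1/2}$, and likewise $\tr^4(\vT_S\vSigma)\ge\tr^2((\vT_S\vSigma)^2)$. The bound you need is saved by the observation that no factor $\tr(\vT_S\vSigma)$ (i.e.\ no part $m_l=1$) can occur in the Wick expansion: within each summand the two difference vectors $\vX_{i,\ell_1}-\vX_{i,\ell_2}$ and $\vX_{i,k_1}-\vX_{i,k_2}$ are built from disjoint index pairs and are therefore independent with zero cross-covariance, so every closed loop in the pairing diagram has length at least two. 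Hence only the partitions $(4)$ and $(2,2)$ arise, and both $\tr((\vT_S\vSigma)^4)\le\tr^2((\vT_S\vSigma)^2)$ and $\tr^2((\vT_S\vSigma)^2)$ itself are admissible. With this correction your argument goes through and coincides with the paper's (which itself leaves this covariance bound implicit, deferring to the analogous computation in Sattler and Pauly, 2018).
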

\begin{proof}
Again the unbiasedness is clear and we consider the variance.

We calculate
\[\begin{array}{ll}\Var(A_2)&= \sum\limits_{i=1}^a\frac{\Var\left( \sum\limits_{\begin{footnotesize}\substack{\ell_1,\ell_2=1\\ \ell_1>\ell_2}\end{footnotesize}}^{n_i}
\sum\limits_{\begin{footnotesize}\substack{k_2=1\\k_2\neq \ell_1\neq \ell_2 }\end{footnotesize}}^{n_i}\sum\limits_{\begin{footnotesize}\substack{k_1=1\\ \ell_2\neq \ell_1\neq k_1>k_2}\end{footnotesize}}^{n_i}
\left[\left({\vX}_{i,\ell_1}-{\vX}_{i,\ell_2}\right)^\top \vT_S\left({\vX}_{i,k_1}-{\vX}_{i,k_2}\right)\right]^2\right)}{\left[{4\cdot 6 \sum_{i=1}^a \binom{n_i} {4}}\right]^2}\\[2.2ex]
&\leq  \frac  {\sum_{i=1}^a \binom{n_i}{4} \binom {n_i} 4-\binom{n_i-4}{4}} {\left[{4\cdot  \sum_{i=1}^a \binom{n_i} {4}}\right]^2} \0\left(\tr^2\left(\left(\vT_S\vSigma_i\right)^2\right)\right) .

\end{array}\]\\
Similar as before for $ n_{\max} \to \infty$ we get  
\[\Var(A_2)\leq \0\left({n_{\max}}^{-1}\right)\cdot\0\left(\tr^2\left(\left(\vT_S\vSigma_i\right)^2\right)\right)\]
and for $ n_{\max}$ bound and $a \to \infty$

\[\Var(A_2)\leq \0\left({a}^{-1}\right)\cdot\0\left(\tr^2\left(\left(\vT_S\vSigma_i\right)^2\right)\right).\]
Again the result follows by using Tschebyscheff's inequality.
\end{proof}
With these theorems the usage of the estimated standardized quadratic form can be justified.
\begin{proof}[Proof of Theorem~\ref{Theorem4}]
The result follows directly by theorem 3.2 from \cite{sattler2018}.
\end{proof}

For the proof of Theorem~\ref{Schaetzerf} we need to show different properties that combined leads to the result. 

\begin{proof}[Proof of Theorem~\ref{Schaetzerf}]
 
We conduct this proof in several steps:
\begin{itemize}
\item[a)] $\E(C_1)=\tr\left(\left(\vT_S\vSigma\right)^3\right),$
\item[b)] $\Var(C_1)= \frac{\sum_{j=1}^a \binom {n_j}{6}\left(\binom {n_j}{6}-\binom{n_j-6}{6}\right)}{\left(\sum_{i=1}^a \binom {n_i}{6}\right)^2}  \cdot \0\left(\tr^3\left(\left(\vT_S\vSigma\right)\right)\right),$
\item[c)] $\frac{C_1}{\tr^{3/2}\left(\left(\vT_S\vSigma\right)^2\right)}-\frac{\tr\left(\left(\vT_S\vSigma\right)^3\right)}{\tr^{3/2}\left(\left(\vT_S\vSigma\right)^2\right)}\stackrel{\mathcal P}{\to} 0$ in our frameworks \ref{asframe1}-\ref{asframe5},
\item[d)] $\frac{C_1^2}{A_2^4}-(f_P)^{-1}\stackrel{\mathcal{P}}{\to} 0$ in our frameworks \ref{asframe1}-\ref{asframe5}.\\\\
\end{itemize}

The results from \cite{sattler2018} directly yield to
\[\E(C_1)=\tr\left(\left(\vT_S\vSigma\right)^3\right)\] and
\[\Var(C_1)=\sum\limits_{i=1}^a\frac{\Var(C_{i,1})}{6!\cdot \sum\limits_{j=1}^a \binom {n_j}{6}}\leq \frac{\sum_{j=1}^a \binom {n_j}{6}\left(\binom {n_j}{6}-\binom{n_j-6}{6}\right)}{\left(\sum_{i=1}^a \binom {n_i}{6}\right)^2} \cdot \0\left( \tr^3\left(\left(\vT_S\vSigma\right)\right)\right)\]

which prooves a) and b). Together with Tschebychefs inequality this leads to an unbiased ratio consistent estimator for $\tr\left(\left(\vT_S\vSigma\right)^3\right)$.\\\\
For part c) we calculate
\[\E\left(\frac{C_1}{\tr^{3/2}\left(\left(\vT_S\vSigma\right)^2\right)}-\frac{\tr\left(\left(\vT_S\vSigma\right)^3\right)}{\tr^{3/2}\left(\left(\vT_S\vSigma\right)^2\right)}\right)=0\] and
\[\Var\left(\frac{C_1}{\tr^{3/2}\left(\left(\vT_S\vSigma\right)^2\right)}-\frac{\tr\left(\left(\vT_S\vSigma\right)^3\right)}{\tr^{3/2}\left(\left(\vT_S\vSigma\right)^2\right)}\right)\]
\[=\frac{\Var\left(C_1\right)}{\tr^3\left(\left(\vT_S\vSigma\right)^2\right)}\leq 27\cdot 
\frac{\sum_{j=1}^a \binom {n_j}{6}\left(\binom {n_j}{6}-\binom{n_j-6}{6}\right)}{\left(\sum_{i=1}^a \binom {n_i}{6}\right)^2} 
\Cdot \Lan(1)\]\\
Again this number is in $\0(n_{\max}^{-1})$ for $n_{\max}\to \infty$ and in $\0(a^{-1})$ for $a\to \infty$. So in both cases the result follows with the Tschebyscheff-inequality.\\\\

At last the proof of part d) is done using the above results.  A similar proof is part of \cite{sattler2018} but we repeat it for better understanding.

With the last lemma it follows for both cases that\\\\
$\begin{array}{ll}
\frac{C_1^2}{\tr^{3}\left(\left(\vT_S\vSigma\right)^2\right)}-\frac 1 {f_P}&=\left(\frac{C_1}{\tr^{3/2}\left(\left(\vT_S\vSigma\right)^2\right)}\right)^2-\left(\frac{\tr\left(\left(\vT_S\vSigma\right)^3\right)}{\tr^{3/2}\left(\left(\vT_S\vSigma\right)^2\right)}\right)^2
\\[2ex]
&\vspace{.25cm}=\left[\frac{C_1}{\tr^{3/2}\left(\left(\vT_S\vSigma\right)^2\right)}-\frac{\tr\left(\left(\vT_S\vSigma\right)^3\right)}{\tr^{3/2}\left(\left(\vT_S\vSigma\right)^2\right)}\right]\left[\frac{C_1}{\tr^{3/2}\left(\left(\vT_S\vSigma\right)^2\right)}+\frac{\tr\left(\left(\vT_S\vSigma\right)^3\right)}{\tr^{3/2}\left(\left(\vT_S\vSigma\right)^2\right)}\right]
\\&=\lan_P(1)\cdot \left[\frac{C_1}{\tr^{3/2}\left(\left(\vT_S\vSigma\right)^2\right)}-\frac{\tr\left(\left(\vT_S\vSigma\right)^3\right)}{\tr^{3/2}\left(\left(\vT_S\vSigma\right)^2\right)}+2\frac{\tr\left(\left(\vT_S\vSigma\right)^3\right)}{\tr^{3/2}\left(\left(\vT_S\vSigma\right)^2\right)}\right]
=\lan_P(1),
\end{array}$\\\\
were for the last step the trace inquality was used together with Slutzky's theorem.
With the ratio-consistency of $A_2$ it follows $A_2/\tr\left(\left(\vT_S\vSigma\right)\right)\stackrel{\mathcal{P}}{\to} 1$ and because of continous mapping $\tr^3\left(\left(\vT_S\vSigma\right)\right)/A_2^3\stackrel{\mathcal{P}}{\to} 1$. This leads to\\\\

$\begin{array}{ll}
\frac{C_1^2}{A_2^3}-(f_P)^{-1}&=
\frac{\tr^{3}\left(\left(\vT_S\vSigma\right)^2\right)}{A_2^3}\frac{C_1^2}{\tr^{3}\left(\left(\vT_S\vSigma\right)^2\right)}-(f_P)^{-1}\\&

=(1 + \lan_P(1))\cdot \frac{1}{\widehat f_P}-\frac{1}{f_P}\\

&=\frac{1}{\widehat f_P}-\frac{1}{f_P}+ \lan_P(1) \Cdot \frac{1}{\widehat f_P}=\lan_P(1).
\end{array}$ \\\end{proof}
It is obvious that this estimator needs a sufficient big amount of groups with at least 6 observations. Similar for the other estimators, which were introduced earlier. From a theoretical point of view a scenario with $n_{\max} \leq 5$ is part of our model. In practice however, this setting is rarely examined. In this case, it would be possible to define some estimators which combine observations from different groups, which would be much more complicated than our estimators.\\

\begin{proof}[Proof of Theorem~\ref{Schaetzerfstar}]
For this proof, some results of \cite{sattler2018} are used and adapted.
First the expactation value of the estimator, using the notation $w:=\sum_{i=1}^a w_i$:\\
$\begin{array}{ll}\E\left(C_1^\star\right)&=\E\left(\frac{1}{8\cdot \sum_{i=1}^a w_i}\sum\limits_{i=1}^a C_{i,1}^\star\right)\\&=\frac{1}{8\cdot w}\sum\limits_{i=1}^a\E\left( C_{i,1}^\star\right)
\\&=\frac{1}{8 \Cdot w}\sum\limits_{i=1}^a\E\left( \sum\limits_{b=1}^{w_i} \Lambda_1(\vsigma_i(b))\cdot\Lambda_2(\vsigma_i(b))\Cdot \Lambda_3(\vsigma_i(b))\right)
\\&=\frac{1}{8\cdot  w}\sum\limits_{i=1}^a\sum\limits_{b=1}^{w_i} \E\left( \Lambda_1(\vsigma_i(b))\cdot\Lambda_2(\vsigma_i(b))\Cdot \Lambda_3(\vsigma_i(b))\right)
\\&=\frac{1}{8\cdot  w}\sum\limits_{i=1}^a w_i\cdot \E\left( \Lambda_1(1,2,3,4,5,6)\cdot\Lambda_2(1,2,3,4,5,6)\Cdot \Lambda_3(1,2,3,4,5,6)\right)
\\&=\frac{1}{8\cdot  w}\sum\limits_{i=1}^a w_i\cdot 8 \tr\left(\left(\vT_S\vSigma\right)^3\right)
\\&= \tr\left(\left(\vT_S\vSigma\right)^3\right).
\end{array}$\\\\

With Theorem A.9 Theorem A.10 and Theorem A.16 from \cite{sattler2018} for the variance we get \\\\
$\begin{array}{ll}
\Var\left(C_1^\star\right)&=\frac{1}{\left(8\cdot w\right)^2}\sum\limits_{i=1}^a \Var\left(C_{i,1}^\star\right)
\\&\leq \frac{1}{\left(8\cdot w\right)^2}\sum\limits_{i=1}^a w_i^2\cdot\left[0+1-\left(1-\frac{1}{w_i}\right)\cdot \frac{\binom{n_i-6}{6}}{\binom{n_i}{6}}\right].
\end{array}$\\\\\\
Again there the same to cases. If $n_{\max}$ is bound and therefore $\max\limits_{i=1,...,a}(w_i)$ is bound, it follows $a\to \infty$ and hereby\\

$\begin{array}{ll}&\frac{1}{\left(8\cdot w\right)^2}\sum\limits_{i=1}^a w_i^2\cdot\left[0+1-\left(1-\frac{1}{w_i}\right)\cdot \frac{\binom{n_i-6}{6}}{\binom{n_i}{6}}\right]
 \\\leq &\frac{1}{\left(8\cdot w\right)\Cdot a \cdot \min\limits_{i=1,...,a}(w_i)}\cdot  \max\limits_{i=1,...,a}(w_i) \sum\limits_{i=1}^a w_i\cdot 1
 \\[1.8ex]=& \Lan\left(a^{-1}\right)\Cdot \frac{\max\limits_{i=1,...,a}(w_i)}{\min\limits_{i=1,...,a}(w_i)}  \\
 =& \Lan\left(a^{-1}\right)
\end{array}$\\\\

while for $n_{\max} \to \infty$ which implies  $\max\limits_{i=1,...,a}(w_i)\to \infty$ we calculate
first \\\\
$\begin{array}{ll}&w_i^2\cdot\left[0+1-\left(1-\frac{1}{w_i}\right)\cdot \frac{\binom{n_i-6}{6}}{\binom{n_i}{6}}\right]\\[1.55ex]
=&w_i\cdot\left[w_i\cdot \left(1- \frac{\binom{n_i-6}{6}}{\binom{n_i}{6}}\right)+ \frac{\binom{n_i-6}{6}}{\binom{n_i}{6}}\right]
\\[1.55ex]
\leq &w_i\cdot\left[\left(\upsilon \cdot \binom{n_i}{6}+1\right)\left(1- \frac{\binom{n_i-6}{6}}{\binom{n_i}{6}}\right)+ \frac{\binom{n_i-6}{6}}{\binom{n_i}{6}}\right]\\[1.55ex]
= &w_i\cdot\left[\upsilon\left(\binom{n_i}{6}- {\binom{n_i-6}{6}}\right)+ 1\right]
\\\leq &w_i\cdot\left[\upsilon\left(\binom{n_{\min}}{6}- {\binom{n_{\min}-6}{6}}\right)+ 1\right]

\end{array}$\\\\
and therefore \\

$\begin{array}{ll}
\leq &\frac{1}{\left(8\cdot w\right)^2}\sum\limits_{i=1}^a w_i^2\cdot\left[0+1-\left(1-\frac{1}{w_i}\right)\cdot \frac{\binom{n_i-6}{6}}{\binom{n_i}{6}}\right]\\
\leq &\frac{1}{\left(8\cdot w\right)^2}\sum\limits_{i=1}^a w_i\cdot\left[\upsilon\left(\binom{n_{\min}}{6}- {\binom{n_{\min}-6}{6}}\right)+ 1\right]
\\\leq &\frac{1}{\left(64\cdot w\right)\cdot \max\limits_{i=1,...,a}(w_i)}\sum\limits_{i=1}^a w_i\cdot\left[\upsilon\left(\binom{n_{\min}}{6}- {\binom{n_{\min}-6}{6}}\right)+ 1\right]
\\\leq &\frac{1}{\left(64\cdot w\right)\cdot \left(\upsilon\cdot\binom{n_{\max}}{6}-1\right)}\sum\limits_{i=1}^a w_i\cdot\left[\upsilon\left(\binom{n_{\min}}{6}- {\binom{n_{\min}-6}{6}}\right)+ 1\right]
\\[1.8ex]= &\frac{\left[\upsilon\left(\binom{n_{\min}}{6}- {\binom{n_{\min}-6}{6}}\right)+ 1\right]}{64\cdot \left(\upsilon\cdot\binom{n_{\max}}{6}-1\right)}
= \Lan\left(n_{\max}^{-1}\right).
\end{array}$\\\\
From this both values the results follows analogous to the proof of Theorem~\ref{Schaetzerf}.\end{proof}

\newpage
\bibliographystyle{apalike}
\bibliography{Literatur}
\end{document}